\renewcommand*\FXLayoutInline[3]{{\@fxuseface{inline}
  \ignorespaces\noindent \ovalbox{\hspace{.01\textwidth} \begin{minipage}{.95\textwidth}
  	#3 \fxnotename{#1}: #2
  \end{minipage}\hspace{.01\textwidth}}}
  \newline}
\newtheorem{thm}{Theorem}
\newtheorem{lem}[thm]{Lemma}
\newtheorem{defn}[thm]{Definition}
\newtheorem{itheorem}{Theorem}
\newcommand{\N}{\mathbb{N}}
\newcommand{\Z}{\mathbb{Z}}
\newcommand{\C}{\mathbb{C}}
\newcommand{\A}{\mathcal{A}}
\newcommand{\cC}{\mathcal{C}}
\newcommand{\D}{\mathcal{D}}
\newcommand{\CPt}{\mathcal{C}_{P,t}}
\newcommand{\DPt}{\mathcal{D}_{P,t}}
\newcommand{\half}{\tfrac{1}{2}}
\newcommand{\nhalf}{\nicefrac{1}{2}}
\newcommand{\rad}{\operatorname{rad}}
\newcommand{\lerchp}[3]{\Psi_{+,#2}^{(#1)}(#3)}
\newcommand{\lerchm}[3]{\Psi_{-,#2}^{(#1)}(#3)}
\title[Degenerate twisted traces]{Degenerate twisted traces on\\ quantized Kleinian singularities of type A}
\author{Zev Friedman}
\email{zev.friedman@mail.utoronto.ca}
\address{Department of Pure Mathematics, University of Waterloo, Waterloo, ON, Canada}
\curraddr{Department of Pure Mathematics, University of Toronto, Toronto, ON, Canada}
\author{Ben Webster}
\email{ben.webster@uwaterloo.ca}
\address{Department of Pure Mathematics, University of Waterloo \&
Perimeter Institute for Theoretical Physics, Waterloo, Canada, ON
}
\begin{document}
\begin{abstract}
    We study the space of nondegenerate traces on quantized Kleinian singularities of type A by studying their complement, the degenerate traces.  In particular, we find the dimension of the space of twisted traces as a function of the corresponding automorphism and the quantization parameters, encoded in a polynomial $P$.
\end{abstract}
\maketitle

\section{Introduction}
Given a $\mathbb{C}$-algebra $\A$ and an automorphism $\omega$, a {\bf twisted trace} on $\A$ is a $\C$-linear map $T \colon \A\to \C$ such that $T(ab)=T(\omega(b)a)$.  
The topic of twisted traces on noncommutative algebras, especially quantizations of symplectic singularities, has attracted considerable mathematical attention in recent years.  The traces of interest to us have arisen in the study of conformal field theory \cite{beemDeformationQuantization2017}, but have raised mathematical questions of independent interest.  As shown in \cite{etingofShortStarProducts2020}, a particularly important role in the construction of short star products is played by twisted traces which are strongly nondegenerate, that is, where for all $a$, there is a $b$ with $\deg b\leq \deg a$ such that $T(ab)\neq 0$ (see Definition \ref{def:twisted-trace}).  In this paper, we study a weaker form of nondegeneracy, where we remove the condition $\deg b\leq \deg a$, which is easier to study.  Throughout, we use {\bf nondegenerate} to mean a trace satisfying this weaker condition.

In this paper, we consider the quantizations $\A_P$ of Kleinian singularities of type A and study their space of twisted traces for a natural family of automorphisms $g_t$ depending on a parameter $t\in \C\setminus\{0\}$.  This algebra depends on a choice of monic polynomial $P$, whose degree $d=\deg P$ fixes the singularity that we quantize.  These algebras have already been considered in \cite{etingofTwistedTraces2021}, but primarily with a focus on twisted traces on even quantizations and the question of positive definiteness.  We will consider these questions from a more elementary, algebraic perspective, focusing on the question: What is the set of nondegenerate twisted traces of $\A_P$ as a function of $P$?

As is often the case, we can answer this question by studying the complement of the nondegenerate traces, which are naturally the {\bf degenerate traces}. Since the space of all twisted traces is known to be a $d-1$ or $d$-dimensional $\C$-vector space (when $t=1$ or $t\ne 1$ respectively) and degenerate traces form a subspace, the only interesting invariant determining this structure is the dimension of the space of degenerate traces. 

\begin{itheorem}\label{th:A}
    The space of degenerate traces is a subspace whose dimension is $0\leq \delta(P)\leq d-1$ as defined in \eqref{eq:delta-def}, with every integer in this range being realized for every $t$.  In particular, we have:
    \begin{itemize}
        \item $\delta(P)=0$ if and only if $P$ has no pair of distinct roots whose difference is an integer.
        \item $\delta(P)=d-1$ if and only if all roots of $P$ are distinct and belong to a single coset $a+\Z$ for $a\in \C$.
    \end{itemize}
    In particular, the algebra $\A_P$ has a nondegenerate trace if $t\neq 1$.  On the other hand, when $t=1$, a nondegenerate trace exists if and only if $\delta(P)<d-1$, that is, $P$ has a multiple root or two roots whose difference is not integral.  
\end{itheorem}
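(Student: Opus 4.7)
The plan is to identify the degenerate twisted trace subspace as the span of traces arising from finite-dimensional $g_t$-equivariant simple modules of $\A_P$, and then count these using the string structure of the roots of $P$. First, I would set up the trace space: using the presentation of $\A_P$ as a generalized Weyl algebra with $h$ acting semisimply on a PBW basis and $g_t$ acting diagonally by scaling $X, Y$ by $t^{\pm 1}$, a twisted trace is a linear functional on the quotient $\A_P/[\A_P,\A_P]_{g_t}$, where $[a,b]_{g_t}=ab-g_t(b)a$. Decomposing $\A_P$ into $h$-weight spaces under the adjoint action reduces the analysis to polynomials in $h$, recovering the total trace-space dimensions $d$ (for $t\neq 1$) and $d-1$ (for $t=1$) already known from the literature.

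Next, I would construct explicit degenerate traces. For each finite-dimensional simple $\A_P$-module $V$ with representation $\rho\colon\A_P\to\operatorname{End}(V)$, fix an intertwiner $\sigma\colon V\to V$ satisfying $\sigma\rho(a)=\rho(g_t(a))\sigma$; then $T(a)=\operatorname{tr}_V(\rho(a)\sigma)$ is a twisted trace, and it is degenerate because any $a\in\ker\rho$ (a nonzero two-sided ideal, since $\dim V<\infty$ while $\dim \A_P=\infty$) satisfies $T(ab)=0$ for every $b$. To count these, I would classify the finite-dimensional simple $\A_P$-modules: each corresponds to a string of consecutive integer-spaced roots of $P$ inside a single coset $a+\Z$, of dimension equal to the string length. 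A coset containing $n_a$ distinct roots of $P$ contributes $(n_a-1)_+$ independent traces, summing across cosets to $\delta(P)=\sum_{a}(n_a-1)_+$, matching \eqref{eq:delta-def}.

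Then I would verify the extreme cases and deduce the last assertion. If $P$ has no two distinct roots differing by an integer, $\A_P$ has no finite-dimensional simple modules, so no nonzero degenerate traces exist and $\delta(P)=0$; the converse uses that any degenerate trace has a nonzero left annihilator generating a proper two-sided ideal with finite-dimensional quotient. If all $d$ roots of $P$ are distinct in a single coset, the nested strings produce $d-1$ linearly independent degenerate traces, saturating the upper bound $\delta(P)\leq d-1$; any other configuration (repeated roots, or roots spread across multiple cosets) strictly decreases the count. The statement on existence of nondegenerate traces is then immediate: they form the complement of the $\delta(P)$-dimensional degenerate subspace inside the full $d$- or $(d-1)$-dimensional trace space, so they exist unless $t=1$ and $\delta(P)=d-1$, i.e., unless $t=1$ and $P$ has all $d$ distinct roots in a single coset $a+\Z$.

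The main obstacle I expect is the reverse inclusion: showing that every degenerate trace lies in the span of the traces coming from finite-dimensional representations. This requires analyzing the left annihilator of an arbitrary degenerate trace, proving that it generates a two-sided ideal of finite codimension, and matching the resulting finite-dimensional quotient against the string classification. The linear independence of the string-traces --- depending delicately on $t$ through the normalizations of the intertwiners $\sigma$ --- is the other main technical point, and confirming that every value $0\leq \delta\leq d-1$ is realized for every $t$ will require explicit families of polynomials $P$ interpolating between the two extremes.
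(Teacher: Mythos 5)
Your central strategy---identifying the degenerate trace subspace with the span of twisted traces of finite-dimensional modules---breaks down exactly when $P$ has multiple roots, and this is not a technical obstacle but a genuine failure of the approach. A twisted trace of a finite-dimensional module (with a twisting map $\alpha$ satisfying $a\cdot\alpha(m)=\alpha(g_t(a)\cdot m)$) always has formal Stieltjes transform with only \emph{simple} poles, i.e.\ $C_a^{(k)}=0$ for $k>1$; but when $P$ has a repeated root $a$ and another root in $a+\Z$, there exist degenerate traces whose Stieltjes transforms have higher-order poles, and these lie outside the span you are constructing. Concretely, for $P=(x-a)^2(x-a-1)^2$ the definition \eqref{eq:delta-def} gives $\delta_{[a]}(P)=4-2=2$, whereas your count $\sum_a (n_a-1)_+$ over \emph{distinct} roots gives $1$: your formula does not match \eqref{eq:delta-def}, and the single coset $a+\Z$ supports only one finite-dimensional simple module $S_{a,a+1}$ here, so your span is one-dimensional while the degenerate subspace is two-dimensional. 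The ``reverse inclusion'' you flag as the main obstacle is therefore false as stated, not merely hard.

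The correct route (and the one the paper takes) works with the formal Stieltjes transform throughout: a trace is degenerate if and only if $F_T$ is rational (\Cref{rational=degenerate}); the admissible rational functions are cut out by the linear conditions $\Delta_a^{(k)}=\sum_j t^j D_{a+j+1/2}^{(k)}=0$ (\Cref{lem:Delta=0}); and the dimension count is then linear algebra---on each coset one imposes $\pi=\max_{a'} p_{a'}$ independent linear conditions on a $\sum_{a'} p_{a'}$-dimensional space of principal-part coefficients, yielding $\delta_{[a]}(P)$ with multiplicities counted correctly. Your argument could be salvaged only by enlarging the class of modules to include the non-equivariant twisting maps $\alpha$ (violating \eqref{eq:twisting}) discussed at the end of the paper's Section 5, but at that point one is effectively redoing the Stieltjes-transform analysis. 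The forward direction of your argument (traces of finite-dimensional modules are degenerate, via $\ker\rho\subset\rad T$) is fine and does establish the lower bound $\delta(P)\geq\sum_a(n_a-1)_+$ in the simple-root case, but it cannot give the equality in general.
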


We prove this by studying the {\bf formal Stieltjes transform} $F_T$ of a trace $T$.  This is a power series which encodes the information of a twisted trace.  We show below that this power series is the Taylor expansion at $z=\infty$ of a rational function if and only if $T$ is degenerate (\Cref{rational=degenerate}).  Thus, we prove Theorem~\ref{th:A} by characterizing the rational functions which appear in this way (Lemma~\ref{lem:Delta=0}).  This allows us to prove a general characterization of the dimension of the space of degenerate traces (\Cref{th:D-dimension}).  We will also discuss how similar tools can be applied to study strong nondegeneracy; this is a nonlinear condition, so it is much more complicated to study, but it can be characterized by the Pad\'e approximants of $F_T(x)$.  Nondegeneracy will fail on polynomials of degree $\leq n$ if the $n$th Pad\'e approximant to $F_T(x)$ has denominator of lower than expected degree (Lemma \ref{lem:Pade}).  

There are many possible extensions of this work, for example, considering general hypertoric enveloping algebras.  The space of twisted traces for these algebras is characterized by the ``quantum Hikita conjecture'' \cite{kamnitzerQuantumHikita2021} of Kamnitzer, McBreen, and Proudfoot (a theorem in the hypertoric case). These have been studied in more generality by the second author, Gaiotto, Hilburn, Redondo-Yuste, and Zhou \cite{gaiottoTwistedTraces2025}.  However, no consideration has been given there to understanding which of these traces are degenerate.

In addition to the proof of Theorem~\ref{th:A}, we discuss a few other interesting details about these traces.  In particular, we show how to write any degenerate trace as a sum of pullbacks of degenerate traces from algebras where $P$ has two roots and characterize the degenerate traces that appear concretely as twisted traces of finite-dimensional representations (Lemma~\ref{lem:finite-dimensional}).  Finally, we point out the existence of a meromorphic function satisfying the recursion \eqref{eq:tilde-recursion} expected of the formal Stieltjes transform of a trace, though the transform is a power series with radius of convergence $0$ and thus does not define a meromorphic function.

\subsection*{Acknowledgements}

The authors appreciate useful conversations with Pavel Etingof, Davide Gaiotto, and Daniil Klyuev about twisted traces.
B. W. is supported by an NSERC Discovery Grant. This research was supported in part by a Mathematics Undergraduate Research Award from the Faculty of Mathematics, University of Waterloo, and in part by Perimeter Institute for Theoretical Physics. Research at Perimeter Institute is supported by the Government of Canada through the Department of Innovation, Science and Economic Development Canada and by the Province of Ontario through the Ministry of Research, Innovation and Science.

\section{Background}

Given a nonconstant polynomial $P\in\C[x]$, the quantized Kleinian singularity $\A_P$ is the free algebra on $\C$ in three variables $u, v, z$, with the following relations:
$$zu=uz-u,\hspace{5mm} zv=vz+v,\hspace{5mm} uv=P(z-\half),\hspace{5mm} vu=P(z+\half).$$
We will write $\A=\A_P$ for brevity unless the identity of $P$ is crucial for a given result.  This algebra makes a number of appearances in different aspects of the theory of symplectic singularities.  It is the $W$-algebra associated to a subregular nilpotent of $\mathfrak{sl}_d$, a hypertoric enveloping algebra for a rank $d-1$ torus acting on a $d$-dimensional vector space, and the spherical Cherednik algebra for $\Z/d\Z$ acting on $\C$ by the $d$th roots of unity.  

Notice that $\A$ decomposes as a sum of eigenspaces of $\text{ad}z$, where $\text{ad}z$ maps $a\in\A$ to $za-az$, and that the eigenvalues are exactly the integers. For each $k\in\Z$, let $\A_k$ be the eigenspace of $\text{ad}z$ for the eigenvalue $k$. Note that $\A_0=\C[z]$ and that if $a\in\A_j,b\in\A_k$ then $ab\in\A_{j+k}$.

For all $t\in\C\setminus\{0\}$, let $g_t:\A\to\A$ be the automorphism given by $u\mapsto t^{-1}u$, $v\mapsto tv$, and $z\mapsto z$.  
\begin{defn}\label{def:twisted-trace}
A twisted trace on $\A$ with respect to $g_t$ is a linear functional $T:\A\to\C$ with the property $T(ab)=T(g_t(b)a)$ for all $a,b\in\A$.

Consider the {\bf radical} $\rad T$ of a twisted trace $T$ 
\[\rad(T)=\{a\in \A \mid T(ab)=0\text{ for all }b\in\A\}\]
The trace $T$ is {\bf degenerate} if $\rad(T)\neq 0$, that is,  if there exists $0\ne a\in \A$ such that $T(ab)=0$ for all $b\in\A$.  Otherwise, we have $\rad(T)=0$ and we call $T$ {\bf nondegenerate}.  

We will also be interested in a stronger version of nondegeneracy: we call $T$ {\bf $n$-degenerate} if for some nonzero $p(z)$ of degree $\leq n$, we have $T(p(z)q(z))=0\text{ for all }q\in \C[x]$ satisfying $\deg q\leq n$.  
\end{defn}

We can also think about this nondegeneracy in terms of the infinite Hankel matrix
\begin{equation}\label{eq:Hankel}
    H=\begin{bmatrix}
    T(1) & T(z) & T(z^2) &\cdots \\
    T(z) & T(z^2) & T(z^3) &\cdots \\
    T(z^2) & T(z^3) & T(z^4) &\cdots \\
    \vdots & \vdots &\vdots & \ddots 
    \end{bmatrix}
\end{equation}
The trace $T$ is nondegenerate if the matrix $H$ is nondegenerate and $n$-degenerate if its principal $n\times n$ minor is degenerate.  

 Let $\CPt$ denote the space of twisted traces on $\A_P$ with respect to $g_t$, and let $\DPt$ denote the set of degenerate twisted traces on $\A_P$ with respect to $g_t$.  One can readily verify that $\CPt$ has a natural vector space structure and that $\DPt$ is a vector subspace, since if $T(ab)=0$ and $T'(a'b)=0$ for all $b$, then $(T+T')(aa'b)=0$ as well.

 Note that the twisted trace property shows that $\rad T$ is a two-sided ideal, since if $a\in \rad T$ and $a',b\in \A$, we have \[T((aa')b)=T(a(a'b))=0 \qquad \qquad T((a'a)b)=T(a(bg_t^{-1}(a')))=0.\]

If $P_1(x)\mid P_2(x)\in\C[x]$ and we choose $Q_1,Q_2$ so that $P_1(x)Q_1(x)Q_2(x)=P_2(x)$, then there is a natural homomorphism $\varphi_{Q_1,Q_2}\colon \A_{P_2}\to \A_{P_1}$ defined by
\begin{equation}\label{varphi-def}
\varphi_{Q_1,Q_2}(u)=Q_1(z-\half)u \qquad \varphi_{Q_1,Q_2}(v)=Q_2(z+\half)v\qquad \varphi_{Q_1,Q_2}(z)=z.    
\end{equation}
Note that if $aP_1(x)=P_2(x)$ for some $0\ne a\in\C$, then the map $\varphi_{a,1}$ is an isomorphism. So for the remainder of the paper it suffices to consider $\A_P$ where $P$ is monic.
This map commutes with the action of $g_t$, so the pullback by $\varphi_{Q_1,Q_2}$ preserves twisted traces.  

It will also be occasionally useful to note that $\A_{P(x)}\cong \A_{P(-x)}$ via the isomorphism
\begin{equation}\label{eq:negate-z}
    u\mapsto v \qquad v\mapsto u \qquad z\mapsto -z
\end{equation}
Note that this isomorphism intertwines $g_t$ and $g_{t^{-1}}$, so pullback by it swaps twisted traces for these automorphisms.  
Similarly,  $\A_{P(x)}\cong \A_{P(x-r)}$ via 
\begin{equation}\label{eq:shift-z}
    u\mapsto u \qquad v\mapsto v \qquad z\mapsto z+r
\end{equation}

In their paper \cite{etingofTwistedTraces2021}, Etingof et al. showed that:
\begin{lem}[\mbox{\cite[Prop. 2.3]{etingofTwistedTraces2021}}]
    A linear functional $T$ on $\A$ is a twisted trace with respect to $g_t$ if and only if 
    \begin{enumerate}
        \item $T(\A_k)=0$ for all $k\ne0$, and
        \item $T(S(z-\half)P(z-\half))=tT(S(z+\half)P(z+\half))$ for all $S\in\C[x]$.
    \end{enumerate}  
\end{lem}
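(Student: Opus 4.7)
The plan is to prove both directions separately, using the grading $\A=\bigoplus_k \A_k$ together with the observation that $g_t$ acts on $\A_k$ by the scalar $t^k$ (since $u\in\A_{-1}$ is scaled by $t^{-1}$ and $v\in\A_1$ by $t$).

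For the forward direction, assume $T$ is a twisted trace. To obtain (1), fix $b\in\A_k$ with $k\ne 0$ and apply $T(ab)=T(g_t(b)a)$ with $a=1$: this gives $T(b)=t^k T(b)$, forcing $T(b)=0$ when $t^k\ne 1$. When $t^k=1$ (which includes the case $t=1$), take $a=z$: combining $T(zb)=T(g_t(b)z)=t^k T(bz)=T(bz)$ with $[z,b]=kb$ yields $kT(b)=T(zb)-T(bz)=0$, and hence $T(b)=0$ since $k\ne 0$. For (2), apply the trace identity to a pair in $\A_{-1}\times\A_1$ of the form $(S(z-\half)u,\,v)$ (or a suitable analogue), and use $uv=P(z-\half)$, $vu=P(z+\half)$ together with the commutation rules $u\,p(z)=p(z+1)\,u$ and $v\,p(z)=p(z-1)\,v$ (derived from $zu=uz-u$ and $zv=vz+v$) to rewrite both sides as $T$ of polynomials in $z$; after a relabeling of $S$, this yields the identity in (2).

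For the backward direction, assume (1) and (2). By $\C$-linearity and the grading, it suffices to verify $T(ab)=T(g_t(b)a)$ for homogeneous $a\in\A_j$, $b\in\A_k$. When $j+k\ne 0$, both $ab$ and $g_t(b)a$ lie in $\A_{j+k}$, on which $T$ vanishes by (1), so both sides are $0$. When $j+k=0$ and $j=0$, we have $a,b\in\A_0=\C[z]$ and $g_t|_{\A_0}=\mathrm{id}$, so the identity reduces to commutativity of $\C[z]$. When $j+k=0$ with $j>0$, write $a=v^j p(z)$ and $b=u^j q(z)$; the two sides $T(v^j p(z)\,u^j q(z))$ and $t^{-j}T(u^j q(z)\,v^j p(z))$ can then be reduced, via repeated application of $uv=P(z-\half)$ and $vu=P(z+\half)$ together with the commutation rules, to $T$ of polynomials in $z$, and their equality follows by an induction on $j$ in which (2) provides the $j=1$ base case.

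The main obstacle is the backward direction for $j>1$: each commutation of a $u$ past a $v$ picks up a factor of $P$ evaluated at a shift of $z$, so one must track how these shifts accumulate on both sides and show they align via repeated application of (2). An induction that peels off one pair $(u,v)$ at a time, reducing the $j$-case to the $(j-1)$-case plus one instance of (2), should complete the argument.
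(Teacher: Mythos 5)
The paper offers no proof of this lemma---it is quoted directly from \cite[Prop.~2.3]{etingofTwistedTraces2021}---so there is nothing internal to compare against; your architecture (grading reduction, the $a=1$ and $a=z$ substitutions for condition (1), a single $(u,v)$-pair computation for condition (2), induction on the number of pairs for the converse) is the standard one and is structurally sound. In particular, your $a=z$ trick correctly disposes of the case $t^k=1$, including $t=1$.

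The one genuine problem sits exactly where you hedge, in the forward derivation of (2). Taking the relations as printed, your commutation rules $u\,p(z)=p(z+1)\,u$ and $v\,p(z)=p(z-1)\,v$ are correctly derived, but then the pair $(a,b)=\bigl(S(z-\half)u,\,v\bigr)$ gives
\[
T\bigl(S(z-\half)P(z-\half)\bigr)=t\,T\bigl(vS(z-\half)u\bigr)=t\,T\bigl(S(z-\tfrac{3}{2})P(z+\half)\bigr),
\]
and no relabeling of $S$ converts the argument pair $\bigl(z-\half,\ z-\tfrac{3}{2}\bigr)$ into $\bigl(z-\half,\ z+\half\bigr)$: the offset is $-1$ where the statement needs $+1$, and shifting $S$ moves both arguments together. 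The underlying reason is that the printed relations are inconsistent: for $P(x)=x$ they give $z=uv+\half=vu-\half$, hence $zu-uz=+u$, contradicting $zu=uz-u$ and forcing $u=0$. The consistent convention (the one in the cited source, and the one under which \eqref{varphi-def} is a homomorphism) is $zu=uz+u$, $zv=vz-v$, so that $u\in\A_1$, $v\in\A_{-1}$, $g_t$ acts on $\A_k$ by $t^{-k}$, and $u\,p(z)=p(z-1)\,u$, $v\,p(z)=p(z+1)\,v$; with these rules the same pair $\bigl(S(z-\half)u,v\bigr)$ yields (2) on the nose. A complete proof must detect and resolve this sign issue rather than defer it to ``a suitable analogue.''

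The converse induction you flag as the main obstacle does close, and you should write it out. With the corrected rules, for $a=u^jp(z)\in\A_j$ and $b=v^jq(z)\in\A_{-j}$ one computes $u^jv^j=\prod_{i=1}^{j}P(z-i+\half)$ and $v^ju^j=\prod_{i=1}^{j}P(z+i-\half)$, so the identity to prove is
\[
T\Bigl(p(z-j)\,q(z)\prod_{i=1}^{j}P(z-i+\half)\Bigr)=t^{j}\,T\Bigl(p(z)\,q(z+j)\prod_{i=1}^{j}P(z+i-\half)\Bigr),
\]
which is exactly $j$ chained applications of (2), each shifting all arguments by $1$ and contributing one factor of $t$; your base case $j=1$ corresponds to $S(x)=p(x-\half)q(x+\half)$. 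You should also state the symmetric case $a\in\A_{-j}$, $b\in\A_{j}$ explicitly (it is not formally implied by the case you treat), and note that homogeneous elements have the form $u^jp(z)$ or $v^jp(z)$, which is where linearity in $p,q$ enters.
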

This means that a twisted trace is uniquely determined by its behaviour on $\A_0$.

\begin{defn}\label{def:FST}
If $T$ is a linear functional on $\C[z]$, then the {\bf formal Stieltjes transform} of $T$ is the formal power series $F_T\in x^{-1}\C[[x^{-1}]]$ given by 
$$F_T(x)=\sum_{n=0}^\infty x^{-n-1}T(z^n)=T(\tfrac{1}{x-z}).$$
\end{defn}  

For $n\in\Z$, let $\C_n[x]$ denote the vector space of polynomials of degree at most $n$ (in particular, $\C_n[x]=\{0\}$ for $n<0$).

\begin{lem}[\mbox{\cite[Prop. 2.5]{etingofTwistedTraces2021}}]\label{FS-injective}
    The map $T\mapsto P(x)(F_T(x+\half)-tF_T(x-\half))$ is a vector space isomorphism  $\CPt\overset{\sim}{\to}\C_{\deg P-1}[x]$ if $t\ne1$, and from $\CPt\overset{\sim}{\to}\C_{\deg P-2}[x]$ if $t=1$.
    In particular, if $P(x)(F(x+\half)-tF(x-\half))$ is a polynomial, then $F(x)$ must be the formal Stieltjes transform of some twisted trace on $\A$.
\end{lem}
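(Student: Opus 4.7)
My plan is to explicitly compute $P(x)\bigl(F_T(x + \half) - t F_T(x - \half)\bigr)$ via a polynomial-division trick and then use the resulting formula to establish that the map lands in the claimed polynomial space, is injective, and is surjective.

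By definition, $F_T(x + \half) = T\bigl(1/(x - (z - \half))\bigr)$ and $F_T(x - \half) = T\bigl(1/(x - (z + \half))\bigr)$, where each fraction is interpreted as a power series in $x^{-1}$. Applying the polynomial-division identity
\[\frac{P(x)}{x - y} = A(x, y) + \frac{P(y)}{x - y}, \qquad A(x, y) := \frac{P(x) - P(y)}{x - y} \in \C[x, y],\]
with $A$ of degree $d - 1$ in $x$, and specializing $y = z \mp \half$, I would obtain
\[P(x) F_T(x \pm \half) = T\bigl(A(x, z \mp \half)\bigr) + \sum_{n \geq 0} x^{-n-1}\, T\bigl(P(z \mp \half)(z \mp \half)^n\bigr).\]
The twisted-trace condition from the previous lemma, applied with $S(y) = y^n$, says precisely that the $n$th summand in the remainder for $x + \half$ equals $t$ times the corresponding summand for $x - \half$; so these remainders cancel in the difference, leaving
\[P(x)\bigl(F_T(x + \half) - t F_T(x - \half)\bigr) = T\bigl(A(x, z - \half)\bigr) - t\, T\bigl(A(x, z + \half)\bigr),\]
a polynomial in $x$ of degree at most $d - 1$. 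Since the leading $x^{d-1}$ coefficient of $A(x, y)$ is $1$ independent of $y$, this polynomial has $x^{d-1}$ coefficient $(1 - t)\, T(1)$, which vanishes when $t = 1$, dropping the degree bound to $d - 2$.

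The same computation supplies the converse and with it both injectivity and the ``in particular'' claim. Given $F \in x^{-1}\C[[x^{-1}]]$, define $T$ on $\C[z]$ by reading off coefficients of $F$ and extend by $T(\A_k) = 0$; the displayed identity shows that $P(x)(F(x + \half) - tF(x - \half))$ is a polynomial if and only if all the brackets $T(P(z - \half)(z - \half)^n) - t\, T(P(z + \half)(z + \half)^n)$ vanish, which is precisely the twisted-trace recursion. Hence such an $F$ satisfies $T \in \CPt$ and $F = F_T$. If moreover the output polynomial is zero, then $F_T(x + \half) = tF_T(x - \half)$ in $\C((x^{-1}))$, and a recursive comparison of coefficients in $x^{-1}$ forces $T(z^n) = 0$ for all $n$, so $T = 0$. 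For surjectivity I would count dimensions directly via the recursion: taking $S(y) = y^k$, the coefficient of the top-degree term $z^{d+k}$ in $(z - \half)^k P(z - \half) - t(z + \half)^k P(z + \half)$ is $1 - t$, so for $t \neq 1$ this determines $T(z^{d+k})$ from lower values, leaving $T(1), \ldots, T(z^{d-1})$ as free parameters and giving $\dim \CPt = d = \dim \C_{d-1}[x]$. For $t = 1$, the top term vanishes and the next-order coefficient of $z^{d+k-1}$ becomes $-(d+k)$, so the recursion instead determines $T(z^{d+k-1})$ from lower values; thus $T(1), \ldots, T(z^{d-2})$ are free and $\dim \CPt = d - 1 = \dim \C_{d-2}[x]$. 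In either case, matching dimensions with the established injectivity yields the isomorphism.

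The main obstacle is bookkeeping: handling polynomials in $x$ together with formal series in $x^{-1}$ within $\C((x^{-1}))$, tracking leading coefficients carefully, and separating the degenerate $t = 1$ case (where everything drops by one order in both the degree bound and the dimension count) from the generic $t \neq 1$ case. The conceptual core is the polynomial-division identity, which turns the twisted-trace recursion into the statement that the ``rational remainder'' vanishes, isolating the polynomial part.
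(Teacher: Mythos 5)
Your proof is correct. Note that the paper itself does not prove this lemma---it is quoted verbatim from \cite[Prop.~2.5]{etingofTwistedTraces2021}---and your argument (the division identity $P(x)/(x-y)=A(x,y)+P(y)/(x-y)$ turning the twisted-trace recursion into cancellation of the negative-power tail, plus the triangular dimension count on $T(z^n)$) is essentially the standard one given in that reference, so there is nothing to flag.
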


Note that this implies that $F_T(x+\half)-tF_T(x-\half)$ is a rational function which has a pole at $a$ of order at most the vanishing order $p_a$ of $P$ at $x=a$.
Thus, we have the principal parts expansion
    \begin{equation}\label{eq:P-principal}
      F_T(x+\half)-tF_T(x-\half) = \sum_{a\in \C} \frac{D_{a}^{(1)}}{x-a} +\cdots+  \frac{D_{a}^{(p_a)}}{(x-a)^{p_a}}.  
    \end{equation}
\section{Degenerate traces}

\begin{lem}
    A twisted trace $T$ is degenerate on $\A$ if and only if it is degenerate on $\C[z]\subset\A$.
\end{lem}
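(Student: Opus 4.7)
The plan is to exploit the grading $\A = \bigoplus_{k \in \Z} \A_k$ together with the fact that $T$ vanishes on $\A_k$ for $k \ne 0$, and the relations reducing $v^k u^k$ to elements of $\C[z]$. The ``if'' direction is formal: if $p(z) \in \C[z]$ is a nonzero witness of degeneracy of $T|_{\C[z]}$, then for any $b \in \A$, write $b = \sum b_j$ with $b_j \in \A_j$; since $p(z) \in \A_0$, we have $p(z) b_j \in \A_j$, so $T(p(z) b_j) = 0$ unless $j = 0$, while the $j = 0$ contribution vanishes by hypothesis. Hence $p(z) \in \rad T$ when viewed inside $\A$.

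For the ``only if'' direction, suppose $a \in \rad T$ is nonzero. The first step is to observe that each homogeneous component $a_k$ of $a$ is itself in $\rad T$: given $b \in \A_j$, the only term in $\sum_l a_l b$ lying in $\A_0$ is $a_{-j} b$, so $0 = T(ab) = T(a_{-j} b)$, and this extends to arbitrary $b' \in \A$ because $T$ kills all nonzero graded components. Choosing $k$ with $a_k \ne 0$, I may replace $a$ by $a_k$ and so assume $a$ is homogeneous of some degree $k$. Every element of $\A_k$ can be written as $v^k p(z)$ for $k \geq 0$ and as $u^{-k} p(z)$ for $k \leq 0$ (using the relations to move all polynomial-in-$z$ factors to the right), and the case $k = 0$ is then immediate.

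For $k > 0$ with $a = v^k p(z)$, the key computation is
\[
a \cdot u^k q(z) \;=\; v^k u^k\, p(z-k)\, q(z) \;=\; \Bigl(\prod_{j=0}^{k-1} P\bigl(z + \half - j\bigr)\Bigr)\, p(z - k)\, q(z),
\]
using $p(z) u^k = u^k p(z - k)$ and the identity $v^k u^k = \prod_{j=0}^{k-1} P(z + \half - j)$, which follows by a quick induction from $vu = P(z + \half)$ and $v f(z) = f(z-1) v$. The right-hand side is a nonzero polynomial $\tilde p(z)$ times $q(z)$ (nonzero since $P$ is nonconstant and $p \ne 0$), and $T(\tilde p(z) q(z)) = T(a \cdot u^k q(z)) = 0$ for every $q \in \C[z]$, so $T|_{\C[z]}$ is degenerate. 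The case $k < 0$ is entirely analogous, multiplying on the right by $v^{-k} q(z)$ and using the symmetric identity for $u^{-k} v^{-k}$. The only delicate point is tracking the quantization shifts in these commutation relations, which is routine.
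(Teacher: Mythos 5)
Your proof is correct and follows essentially the same route as the paper's: use the grading to reduce to a homogeneous component of the radical, then multiply by an element of the opposite degree to land in $\C[z]$ and produce a witness of degeneracy there. The only real difference is that you verify the resulting polynomial is nonzero via the explicit computation $v^k u^k=\prod_{j=0}^{k-1}P(z+\half-j)$, whereas the paper multiplies by an arbitrary nonzero $c\in\A_{-j}$ and implicitly relies on $\A_P$ having no zero divisors.
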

\begin{proof}
    $(\Rightarrow)$ Suppose that $T$ is degenerate on $\A$. 
    Then there exists some nonzero $a\in \A$ such that $T(ab)=0$ for all $b\in\A$. 
    Since $\A=\oplus_{k\in\Z}\A_k$, we may write $a=\sum_{k=-K}^K a_k$ for $a_k\in\A_k$, and $K\in\N$. Then since $a\ne0$, there exists $j\in\Z$ such that $a_j\ne0$. 
    For all $b\in \A_{-j}$, we have $T(ab)=\sum_{k=-K}^K T(a_kb)=T(a_jb)$ since $a_kb\in\A_{k-j}$. Choose some nonzero $c\in \A_{-j}$ and note that $0\ne a_jc\in\A_0=\C[z]$. Then for all $R(z)\in\C[z]$, we have that $cR(z)\in\A_{-j}$ and so $T(a_jcR(z))=T(acR(z))=0$, so $T$ is degenerate on $\C[z]$.

    $(\Leftarrow)$ Suppose $T$ is degenerate on $\C[z]$. Then there exists some nonzero $S(z)\in\C[z]$ such that $T(S(z)R(z))=0$ for all $R\in\C[x]$. Then for all $b\in\A_0$ we have $T(S(z)b)=0$, and for all $b\in\A_k$ with $k\ne0$, we have $S(z)b\in\A_k$ and so $T(S(z)b)=0$. Since $\A$ is the direct sum of these spaces, we have $T(S(z)b)=0$ for all $b\in\A$. 
\end{proof}

Note that $I_T=\rad T\cap \C[z]$ is an ideal, so it is generated by a unique monic polynomial $S(x)$, which is characterized as the unique monic polynomial of minimal degree in $I_T$.

\begin{thm}\label{rational=degenerate}
    We have an equality 
    \[I_T=\{S(z) \mid F_T(z)S(z)\in \C[z]\}\]
    In particular, a twisted trace $T$ on $\A$ is degenerate if and only if $F_T(x)$ is a rational function of $x$.  In this case, if we write $F_T(x)=\frac{R(x)}{S(x)}$ with $S$ monic and $\gcd(R,S)=1$, then $S$ generates $I_T$.
\end{thm}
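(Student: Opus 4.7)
The plan is to prove the stated equality directly, and then derive both the degeneracy characterization and the identification of the generator as formal consequences. The key computational input is a partial-fraction-like identity, applied coefficient-wise to $F_T$.

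First, for any $S\in\C[z]$ of degree $d_S$, I would observe the polynomial identity
\[
\frac{S(z)}{x-z} \;=\; \frac{S(x)}{x-z}\;-\;\frac{S(x)-S(z)}{x-z},
\]
where the second summand is a polynomial in $x,z$ of $z$-degree $d_S-1$. Applying $T$ in the $z$-variable and expanding $\frac{1}{x-z}=\sum_{n\ge 0} z^n x^{-n-1}$, one gets the formal Laurent series identity
\[
\sum_{n\ge 0} T\bigl(S(z)z^n\bigr)\,x^{-n-1} \;=\; S(x)F_T(x) - Q(x),
\]
where $Q(x):=T\!\left(\tfrac{S(x)-S(z)}{x-z}\right)\in\C[x]$ has degree at most $d_S-1$. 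The left-hand side lies in $x^{-1}\C[[x^{-1}]]$, and vanishes if and only if $T(S(z)z^n)=0$ for every $n\ge 0$, i.e.\ if and only if $S\in I_T$. Thus $S\in I_T$ if and only if $S(x)F_T(x)=Q(x)\in\C[x]$, which is the first claim.

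From this, the degeneracy characterization follows immediately: by the preceding lemma, $T$ is degenerate on $\A$ if and only if $I_T\ne 0$, and by the equality just proved, this occurs if and only if there is a nonzero $S\in\C[z]$ with $S(x)F_T(x)\in\C[x]$, which is exactly the condition that $F_T(x)$ be a rational function of $x$.

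Finally, assume $F_T=R/S$ with $S$ monic and $\gcd(R,S)=1$. Then $S\cdot F_T=R\in\C[x]$, so $S\in I_T$ by the equality. To see that $S$ generates $I_T$, suppose $S'\in I_T$; then $S'F_T\in\C[x]$, i.e.\ $S'R/S\in\C[x]$, so $S\mid S'R$, and the coprimality of $S$ and $R$ forces $S\mid S'$. Since $I_T\subset\C[z]$ is an ideal and $S$ is a monic element dividing every member, $S$ is its generator. The main (only) subtle point is the careful book-keeping in the identity above, distinguishing the polynomial part $Q(x)$ from the power-series tail that records the values $T(S(z)z^n)$; once that is set up, everything else is formal.
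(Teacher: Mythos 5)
Your proof is correct and follows essentially the same route as the paper's: both hinge on the decomposition $\tfrac{S(x)}{x-z}=\tfrac{S(x)-S(z)}{x-z}+\tfrac{S(z)}{x-z}$, applying $T$, and separating the polynomial part from the tail in $x^{-1}\C[[x^{-1}]]$ to see that $S\in I_T$ exactly when $S(x)F_T(x)$ is a polynomial. Your explicit coprimality argument that $S$ generates $I_T$ is a nice touch the paper leaves implicit.
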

\begin{proof}
    Suppose that $S(z)\in I_T$. Then we have 
    $$S(x)F_T(x)=S(x)F_T(x)-\sum_{n=0}^\infty x^{-n-1}T(S(z)z^n)=T(\tfrac{S(x)-S(z)}{x-z})$$
    Let $R(x)=T(\tfrac{S(x)-S(z)}{x-z})$, and note that $R(x)$ is a polynomial. Then $S(x)F_T(x)=R(x)$ as desired.

Now suppose that $R,S\in\C[x]$ satisfy $S(x)F_T(x)=R(x)$. Then 
    $$R(x)=S(x)F_T(x)=T(\tfrac{S(x)}{x-z})=T(\tfrac{S(x)-S(z)}{x-z})+T(\tfrac{S(z)}{x-z})$$
    Since every term in $R(x)$ and $T(\tfrac{S(x)-S(z)}{x-z})$ has a nonnegative exponent on $x$ and every term in $T(\tfrac{S(z)}{x-z})$ has a negative exponent on $x$, in order for the two sides to be equal we must have
    $$0=T(\tfrac{S(z)}{x-z})=\sum_{n=0}^\infty x^{-n-1}T(S(z)z^n),$$
    and so $T(S(z)b)=0$ for all $b\in\C[z]$ since $\{z^n\}_{n=0}^\infty$ is a basis for $\C[z]$.  Thus, we have $S(z)\in I_T$.  This completes the proof.  
\end{proof}
Note that since the Laurent expansion of $F_T(x)$ at $x=\infty$ (that is, its expansion as a power series in $\C[[x^{-1}]]$) contains no nonnegative powers of $x$, we must have $\deg R<\deg S$.  Thus, we can expand $R/S$ in its principal parts expansion---if we let $m_a$ be the order of vanishing of $S$ at $x=a$ for $a\in \C$, then we have
\[\frac{R(x)}{S(x)}=\sum_{a\in \C} \left( \frac{C_a^{(1)}}{x-a}+\cdots + \frac{C_a^{(m_a)}}{(x-a)^{m_a}}\right)\qquad C_a^{(k)}=\operatorname{res}_{x=a}\left((x-a)^{k-1}\frac{R(x)}{S(x)}\right)\]
Note that this principal part decomposition is equivalent to the Vandermonde decomposition of a Hankel matrix $H$ defined in \eqref{eq:Hankel}.  For simplicity, assume $F_T(x)=\Pi_a(x)$ for a single $a\in \C$.  In this case, $T(z^k)=\sum_{r=1}^{\infty}C_a^{(r)}\binom{r-1}{k}a^{k-r+1}$ by the usual Taylor expansion
\[ \frac{1}{(x-a)^r}=\frac{1}{x^r}+a\binom{r-1}{r} \frac{1}{x^{r+1}} + a^2\binom{r-1}{r+1} \frac{1}{x^{r+2}}+\cdots \]
at $x=\infty$.  Let $r$ be the order of the pole of $\Pi_a(x)$.  By \cite[Th. 2]{boley1997general}, we can factor the Hankel matrix as a product $H=(V^{(a)})^TD^{(a)}V^{(a)}$ where $V$ is an $r\times \infty$ matrix with $V_{ij}^{(a)}=(\binom{j-1}{i-1}a^{j-i})$ and $D$ is the matrix $D_{ij}^{(a)}=(C^{(i+j-1)}_a)$. One can readily calculate that the $i,j$ entry of $(V^{(a)})^TD^{(a)}V^{(a)}$ is
\[ \sum_{k,\ell\geq 1} a^{i+j-k-\ell}C^{(k+\ell-1)}_a \binom{k-1}{i-1}\binom{\ell-1}{j-1}=\sum_{s\geq 1}a^{i+j-s-1}C^{(s)}\binom{s-1}{i+j-2}=T(z^{i+j-2})\] using the reindexing $s=k+\ell-1$ and the usual relation on binomial coefficients.    The general case is found by stacking the matrices $V^{(a)}$ for different $a\in \C$ and taking the matrices $D^{(a)}$ as diagonal blocks, as in \cite[Th. 2]{boley1997general}.

By \eqref{eq:P-principal}, the coefficients $C_{a}^{(k)}$ satisfy the relations
\begin{equation}\label{eq:C-D}
    C_{a-1/2}^{(k)}-tC_{a+1/2}^{(k)} =D_a^{(k)}.
\end{equation}

Consider $r\in\C$ and $P\in\C[x]$; if $r$ is a root of $P$ of order $n$, we will say that $r$ is an $n$-fold root of $P$.  
For each complex number $a$, we let $n_a^+$ be the largest integer such that there is a root of order $\geq n_a^+$ of $P$ in the set $a+\frac{1}{2}+\Z_{\geq 0}$ and similarly with $\geq n_a^-$ and the set $a-\frac{1}{2}-\Z_{\geq 0}$.   Let $n_a=\min (n_a^+,n_a^-)$.
\begin{lem}  For a fixed polynomial $P$ and any degenerate trace $T$, we have $m_a\leq n_a$ for all $a\in \C$.  
\end{lem}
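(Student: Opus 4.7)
The plan is to exploit the recursion \eqref{eq:C-D}, namely $C_{a-\nhalf}^{(k)}-tC_{a+\nhalf}^{(k)}=D_a^{(k)}$, together with the observation that $D_a^{(k)}=0$ whenever $k>p_a$. Since $T$ is degenerate, \Cref{rational=degenerate} gives $F_T=R/S$ rational, so only finitely many coefficients $C_b^{(k)}$ are nonzero; this finite support is what turns the recursion into an obstruction. I will fix $a\in\C$, set $m=m_a$, and show $m\leq n_a^+$; the bound $m\leq n_a^-$ will follow by the symmetric argument.

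For the first bound, suppose for contradiction $m>n_a^+$. For every integer $n\geq 0$, the point $a+n+\nhalf$ lies in $a+\nhalf+\Z_{\geq 0}$, so by the definition of $n_a^+$ one has $p_{a+n+\nhalf}\leq n_a^+<m$, whence $D_{a+n+\nhalf}^{(m)}=0$. Setting $c_n:=C_{a+n}^{(m)}$ and substituting $a+n+\nhalf$ for $a$ in \eqref{eq:C-D}, we obtain $c_n=t\,c_{n+1}$ for every $n\geq 0$. Iterating yields $c_0=t^N c_N$ for all $N\geq 1$. Since $S$ has only finitely many roots, we may choose $N$ large enough that $a+N$ is not a root of $S$, which forces $c_N=0$ and hence $C_a^{(m)}=c_0=0$, contradicting the choice $m=m_a$.

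The bound $m\leq n_a^-$ is obtained in the same manner by iterating \eqref{eq:C-D} to the left: for $n\leq -1$, the point $a+n+\nhalf$ lies in $a-\nhalf-\Z_{\geq 0}$, so the $D$-term again vanishes and $c_n=t\,c_{n+1}$; iterating backwards yields $c_{-N}=t^N c_0$, and taking $N$ large once more forces $c_0=0$. I do not anticipate any serious obstacle; the only care needed is in the index bookkeeping that identifies which shift of $a$ to plug into \eqref{eq:C-D}, and in noting that $t\neq 0$ so that $c_{N}=0$ really does propagate back to $c_0=0$.
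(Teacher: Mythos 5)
Your proof is correct. It rests on the same underlying mechanism as the paper's — propagating the top-order principal-part coefficient of $F_T$ along the coset $a+\Z$, using the fact that the poles of $F_T(x+\half)-tF_T(x-\half)$ are controlled by $P$ — but the execution is genuinely different and, in my view, cleaner. The paper argues by induction down the coset, comparing pole orders via the non-archimedean triangle inequality, and then obtains the bound $m_a\leq n_a^-$ by appealing to the symmetry \eqref{eq:negate-z}. You instead fix the single order $k=m_a$ and observe that \eqref{eq:C-D} degenerates to the two-term recursion $c_n=t\,c_{n+1}$ at every point of $a+\nhalf+\Z_{\geq 0}$ (resp.\ $a-\nhalf-\Z_{\geq 0}$) where $P$ has no root of order $\geq m_a$; telescoping out to where $S$ has no roots, together with $t\neq 0$ and $C_a^{(m_a)}\neq 0$ (from $\gcd(R,S)=1$), gives the contradiction. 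This avoids both the induction and the ultrametric inequality, and treats $n_a^+$ and $n_a^-$ symmetrically without invoking \eqref{eq:negate-z}. One small remark: your argument is also robust to the indexing convention in \eqref{eq:C-D} (whether $D_a^{(k)}=C_{a-\nhalf}^{(k)}-tC_{a+\nhalf}^{(k)}$ or the reverse), since either way the vanishing of the $D$-term yields an invertible two-term relation between consecutive $c_n$'s, which is all the telescoping needs.
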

\begin{proof}
We need only prove that $n^+_a\geq m_a$.   Applying the same argument after the isomorphism \eqref{eq:negate-z} will imply that $n_a^-\geq m_a$ as well.
Restrict consideration to the set $a+\Z$. Since $P$ and $S$ have finitely many zeros, we have $m_{a'}=n_{a'}^+=0$ for all but finitely many $a'\in a+ \Z_{\ge0}$. Thus, we can apply induction with base case being some $a$ such that $m_{a'}=n_{a'}^+=0$ for all $a'\ge a$. For the inductive step, assume that $m_{a+1}\le n_{a+1}^+$.  

    Recall that $P(x)(F_T(x+\half)-tF_T(x-\half))\in\C[x]$, and observe that
    $$F_T(x+\half)-tF_T(x-\half)=\frac{R(x+\half)}{S(x+\half)}-t\frac{R(x-\half)}{S(x-\half)}
.$$
Consider this rational function at $x=a+\half$.  It can only have a pole if $P(a+\half)=0$, and the order $p$ of its pole is at most the vanishing order of $P$ at this point.  Let us compare this with the vanishing orders $m_{a+1}$ and $m_a$; since $R$ and $S$ are coprime, these are also the order of the pole of $R/S$ at these points.  

If $p<m_{a}$, then by the nonarchimedean triangle inequality for vanishing orders, we must have that $m_{a+1}=m_a$. By induction, we know that $n_a^+\geq n_{a+1}^+\geq m_{a+1}=m_a$.  

On the other hand, if $p\geq m_a$, then $b=a+\half$ is the desired root of order $\geq m_a$.  This completes the proof.
\end{proof}

Thus, we can write the formal Stieltjes transform of any degenerate twisted trace as:
\[\frac{R(x)}{S(x)}=\sum_{a\in \C} \Pi_a(x) \qquad \Pi_a(x) =\left( \frac{C_a^{(1)}}{x-a}+\cdots + \frac{C_a^{(n_a)}}{(x-a)^{n_a}}\right)\]

Furthermore, since $P(x)(F_T(x+\half)-tF_T(x-\half))\in\C[x]$, it has vanishing principal parts at all $a\in \C$.

\begin{lem}\label{Stieltjes-change}
    The rational function $\frac{R(x)}{S(x)}$ is the formal Stieltjes transform of a degenerate twisted trace if and only if for all $a\in \C$ which are not roots of $P$
    \begin{equation}\label{eq:t-condition}
        C_{a-1/2}^{(\ell)}=tC_{a+1/2}^{(\ell)}
    \end{equation}
for all $\ell\leq n_{a\pm 1/2}$, and 
$$C_{a-1/2}^{(\ell)}=C_{a+1/2}^{(\ell)}=0$$ 
for all $\ell>n_{a\pm 1/2}$.
\end{lem}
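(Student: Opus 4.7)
The plan is to invoke Lemma \ref{FS-injective}, which identifies the formal Stieltjes transforms of twisted traces as the rational functions $F=R/S$ for which $P(x)\bigl(F(x+\tfrac12)-tF(x-\tfrac12)\bigr)$ is a polynomial; by \Cref{rational=degenerate}, any trace arising this way is automatically degenerate. Setting $G(x):=F(x+\tfrac12)-tF(x-\tfrac12)$, the claim reduces to rephrasing $P(x)G(x)\in\C[x]$ as a system of linear constraints on the principal-part coefficients $C_a^{(\ell)}$ of $F$.

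First I would compute the principal parts of $G$. A translation $y\mapsto x+\tfrac12$ carries the pole of $F$ at $y=a+\tfrac12$ (with principal part $\sum_\ell C_{a+1/2}^{(\ell)}\bigl(y-(a+\tfrac12)\bigr)^{-\ell}$) to a pole of $F(x+\tfrac12)$ at $x=a$ with the same coefficients on $(x-a)^{-\ell}$, and likewise for $F(x-\tfrac12)$. Hence the principal part of $G(x)$ at $x=a$ has $(x-a)^{-\ell}$-coefficient exactly the quantity $D_a^{(\ell)}$ of \eqref{eq:C-D}.

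The condition $P\cdot G\in\C[x]$ is equivalent to: (i) $G$ has no pole at any $a\in\C$ which is not a root of $P$; and (ii) the pole of $G$ at each root $a$ of $P$ has order at most $p_a$. Condition (i) translates directly to vanishing of $D_a^{(\ell)}$ for every $\ell\geq 1$ at each non-root $a$, giving the scaling identity $C_{a-1/2}^{(\ell)}=tC_{a+1/2}^{(\ell)}$. Combined with the preceding lemma's bound $C_a^{(\ell)}=0$ for $\ell>n_a$, this yields the two-part statement: the scaling identity in the range $\ell\leq n_{a\pm 1/2}$ where both sides may be nonzero, and the vanishing of both coefficients for $\ell>n_{a\pm 1/2}$. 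The forward implication then follows directly.

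For the converse, one still needs to verify $P\cdot G\in\C[x]$ from the stated conditions. The scaling identity at non-roots of $P$ directly handles (i). The main obstacle is condition (ii), the pole-order bound at roots of $P$: although the hypotheses are only stated at non-roots, the scaling identities at non-roots propagate along the $\Z$-orbit of $a$, and the minimum-over-forward-and-backward definition of $n_{a\pm 1/2}$ couples the pole orders of $F$ at the half-integer neighbors of $a$ to the root orders of $P$ in that orbit, yielding the desired bound via an orbit-by-orbit analysis.
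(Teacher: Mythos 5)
Your reduction---via \cref{FS-injective} and \cref{rational=degenerate}---to the condition $P(x)\bigl(F(x+\half)-tF(x-\half)\bigr)\in\C[x]$ is exactly what the paper intends (it supplies no explicit proof; the lemma is meant to follow from the two displays preceding it), and your forward direction is essentially complete. One bookkeeping point: your translation computation gives the coefficient of $(x-a)^{-\ell}$ in $G$ as $C_{a+1/2}^{(\ell)}-tC_{a-1/2}^{(\ell)}$, which is not the expression in \eqref{eq:C-D}; the identity you then extract, $C_{a-1/2}^{(\ell)}=tC_{a+1/2}^{(\ell)}$, is the reverse of what your own formula yields. This orientation ambiguity is already present in the paper, but your write-up should not silently identify the two.

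The genuine gap is in the converse, precisely where you flag it, and the fix you sketch cannot close it. The hypotheses constrain $C_b^{(\ell)}$ only at points $b$ for which $b+\half$ or $b-\half$ is a non-root of $P$; at a point $b$ with both $b-\half$ and $b+\half$ roots of $P$, the chain of scaling identities is broken on both sides, and no propagation along $b+\Z$ ever reaches $C_b^{(\ell)}$. Concretely, take $P(x)=x(x-1)$ and $F(x)=(x-\half)^{-5}$: every condition in the lemma is vacuously satisfied (each coefficient it mentions is zero), yet $P(x)\bigl(F(x+\half)-tF(x-\half)\bigr)$ has poles of order $4$ at $0$ and $1$, so $F$ is not the transform of any trace. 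The missing input is the bound $m_b\le n_b$ from the lemma immediately preceding this one, which must be carried as a standing hypothesis on $R/S$ (the paper tacitly does this via the display $R/S=\sum_a\Pi_a$). Even with that hypothesis a residual case remains: if $a$ is a root of $P$ with $p_a<\min(n_{a-1/2},n_{a+1/2})$ --- e.g.\ $P(x)=(x+1)^2x(x-1)^2$ with $F(x)=(x-\half)^{-2}$, which satisfies $m_b\le n_b$ and all stated conditions --- the pole of $G$ at $a$ can still exceed $p_a$, and an additional cancellation condition of the shape \eqref{eq:t-condition} at the \emph{root} $a$ (for $\ell>p_a$) is required. So the orbit-by-orbit analysis you allude to cannot deliver the pole-order bound from the stated hypotheses alone; you need to add the extra hypotheses explicitly rather than assert that they are forced.
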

We can also write this condition in terms of the coefficients $D_{a}^{(\ell)}$ defined by \eqref{eq:P-principal}. Applying \eqref{eq:C-D}, we have  
\begin{equation}\label{eq:C-D2}
    C_{a}^{(k)}=( C_{a}^{(k)}-tC_{a+1}^{(k)})+t ( C_{a+1}^{(k)}-tC_{a+2}^{(k)})+\cdots =\sum_{j=0}^{\infty} t^jD_{a+j+1/2}^{(k)}
\end{equation}
Since $C_{a}^{(k)}=0$ for all but finitely many $a$, we find that:
\begin{lem}\label{lem:Delta=0}
    The twisted trace with the formal Stieltjes transform $F_T(x)$ is degenerate if and only if
\[\Delta^{(k)}_a=\sum_{j=-\infty}^{\infty} t^jD_{a+j+1/2}^{(k)} =0\]
for all $a\in \C$ and $k$.  
\end{lem}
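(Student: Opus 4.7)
The plan is to invoke \Cref{rational=degenerate}, which identifies degenerate traces with those whose formal Stieltjes transform $F_T$ is rational. This translates the question into one about when the linear relations \eqref{eq:C-D} between the principal parts $C_a^{(k)}$ of $F_T$ and the given data $D_a^{(k)}$ admit a solution $C$ with finite support.

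For the forward direction, assume $T$ is degenerate, so $F_T=R/S$ is rational with principal parts $C_a^{(k)}$ supported on a finite set of $a\in\C$. Substituting $D_{a+j+\half}^{(k)}=C_{a+j}^{(k)}-tC_{a+j+1}^{(k)}$ from \eqref{eq:C-D} into the definition of $\Delta_a^{(k)}$ yields
\[\Delta_a^{(k)}=\sum_{j\in\Z}t^j C_{a+j}^{(k)}-\sum_{j\in\Z}t^{j+1}C_{a+j+1}^{(k)},\]
which is a pair of finite sums by the finite support of $C$ and which cancel under the reindexing $j'=j+1$ in the second sum.

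For the converse, suppose $\Delta_a^{(k)}=0$ for every $a$ and $k$, and define candidate principal part coefficients
\[\widetilde{C}_a^{(k)}:=\sum_{j=0}^{\infty}t^j D_{a+j+\half}^{(k)},\]
which are finite sums because each $D_b^{(k)}$ vanishes unless $b$ is a root of $P$. The key claim is that $\widetilde{C}$ has finite support: on any coset $a_0+\Z$, let $j_{\min},j_{\max}$ be the extreme integers $j$ with $D_{a_0+j+\half}^{(k)}\ne 0$; for $m>j_{\max}$ the defining sum of $\widetilde{C}_{a_0+m}^{(k)}$ is empty, and for $m\le j_{\min}$ the sum captures every nonzero $D$-term on the coset and collapses to $t^{-m}\Delta_{a_0}^{(k)}=0$. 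Since only finitely many cosets meet a half-integer shift of the roots of $P$, the set $\widetilde{C}$ is supported on a finite set. Now put
\[\frac{R(x)}{S(x)}:=\sum_{a,k}\frac{\widetilde{C}_a^{(k)}}{(x-a)^k},\]
a rational function vanishing at infinity; a direct computation using the definition of $\widetilde{C}$ confirms that $\widetilde{C}_{a-\half}^{(k)}-t\widetilde{C}_{a+\half}^{(k)}=D_a^{(k)}$, so the principal parts of $R/S(x+\half)-tR/S(x-\half)$ agree with $\sum_{a,k}D_a^{(k)}/(x-a)^k$, and thus $P(x)\bigl(R/S(x+\half)-tR/S(x-\half)\bigr)$ is a polynomial. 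By \Cref{FS-injective}, the expansion of $R/S$ at infinity is the formal Stieltjes transform of some twisted trace $T'$, and since $T$ and $T'$ map to the same polynomial under the isomorphism, injectivity forces $T=T'$, hence $F_T=R/S$ is rational and $T$ is degenerate by \Cref{rational=degenerate}.

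The main obstacle I anticipate is establishing the finite support of $\widetilde{C}$, which is precisely where the hypothesis $\Delta_a^{(k)}=0$ enters the argument. Once this is in hand, the remainder is a formal matching of principal parts and a direct appeal to the isomorphism of \Cref{FS-injective}.
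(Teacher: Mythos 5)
Your proof is correct and takes essentially the same route as the paper, which deduces the lemma from the telescoped identity \eqref{eq:C-D2}, $C_a^{(k)}=\sum_{j\ge0}t^jD_{a+j+1/2}^{(k)}$, together with the finite support of the principal-part coefficients of a rational $F_T$ and \Cref{rational=degenerate}. Your write-up merely makes explicit the finite-support argument for the candidate coefficients $\widetilde{C}_a^{(k)}$ and the reconstruction of the degenerate trace via \Cref{FS-injective}, steps the paper leaves implicit.
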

Note that $\Delta^{(k)}_a=t\Delta^{(k)}_{a-1}$, so the vanishing of this quantity is really a condition on the coset of $a$ modulo $\Z$.  
This description makes it clear how large the space of degenerate traces is. For each coset $[a]\in \C/\Z$, let $\delta_{[a]}(P)$ be the number of roots in $[a]$, counted with multiplicity, minus the maximum multiplicity of a root in this set. Let $\delta(P)$ be the sum of these statistics over $\C/\Z$.  That is:
\begin{equation}\label{eq:delta-def}
\delta(P)=\sum_{[a]\in \C/\Z}\delta_{[a]}(P)=\sum_{a'\in [a]} p_{a'}-\max_{a'\in [a]} p_{a'}
\end{equation}
\begin{thm}\label{th:D-dimension}
    $\dim \DPt=\delta(P)$.
\end{thm}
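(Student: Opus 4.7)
The plan is to translate the problem into linear algebra on the space of principal-part coefficients $\{D^{(k)}_a\}$ appearing in \eqref{eq:P-principal}, then invoke \Cref{FS-injective} and \Cref{lem:Delta=0} to count dimensions. Let $V$ be the $d$-dimensional vector space with coordinates $D^{(k)}_a$ indexed by roots $a$ of $P$ and $1 \leq k \leq p_a$; since any rational function $Q(x)/P(x)$ with $\deg Q < \deg P$ is determined by and realizes an arbitrary principal-parts vector in $V$, I first identify $\CPt$ with a subspace of $V$ via $T \mapsto \{D^{(k)}_a\}$. By \Cref{FS-injective}, the image is all of $V$ when $t \neq 1$, and is the hyperplane $V_0 \subset V$ defined by $\sum_a D^{(1)}_a = 0$ when $t = 1$ (this equation expresses that $P(x)(F_T(x+\half) - F_T(x-\half))$ has degree at most $d-2$, equivalently that $F_T(x+\half) - F_T(x-\half)$ vanishes to order at least $2$ at infinity).

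By \Cref{lem:Delta=0}, $\DPt$ corresponds to the subspace $W \subseteq V$ cut out by the equations $\Delta^{(k)}_a = 0$. The key structural feature is that these equations decouple by $\C/\Z$-cosets: the relation $\Delta^{(k)}_a = t \Delta^{(k)}_{a-1}$ noted after the lemma shows that the only nontrivial equations are parameterized by a coset $[b] \in \C/\Z$ containing some root of $P$ of order $\geq k$, giving a single equation $\Delta^{(k)}_{b-\half} = 0$ per such pair. I would then fix a coset $[b]$ of roots and count: the variables $D^{(k)}_c$ for $c \in [b]$ span a subspace of dimension $\sum_{c \in [b]} p_c$, and within this subspace the nontrivial equations are exactly those with $1 \leq k \leq M_{[b]}$, where $M_{[b]} := \max_{c \in [b]} p_c$. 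These equations involve disjoint sets of variables as $k$ varies (level-$k$ variables only), so they are automatically independent; each is also nontrivial since by definition of $M_{[b]}$ at least one $c \in [b]$ contributes a nonzero term. Hence the coset $[b]$ contributes $\sum_{c \in [b]} p_c - M_{[b]} = \delta_{[b]}(P)$ to $\dim W$, and summing over all cosets of $\C/\Z$ gives $\dim W = \delta(P)$.

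Finally I would reconcile this count with the two cases of \Cref{FS-injective}. When $t \neq 1$, $\DPt = W$ immediately gives the theorem. When $t = 1$, the subtlety is that $\CPt$ equals only the hyperplane $V_0$ rather than all of $V$, so one might worry about an off-by-one correction; however, summing the equations $\Delta^{(1)}_{b-\half} = 0$ over the cosets $[b]$ containing roots yields precisely $\sum_a D^{(1)}_a = 0$, the defining equation of $V_0$. Thus $W$ is automatically contained in $V_0$, so $\DPt = W \cap V_0 = W$ still has dimension $\delta(P)$. The main step requiring care is this $t = 1$ reconciliation: the reason the formula stays the same in both cases is precisely that the $t = 1$ hyperplane condition is a linear combination of the $k = 1$ degeneracy equations and so contributes no additional codimension; beyond this, every step is a direct linear algebra count.
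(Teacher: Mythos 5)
Your proof is correct and follows essentially the same route as the paper's: identify traces with principal-part vectors $\{D^{(k)}_a\}$, cut out $\DPt$ by the decoupled linear conditions $\Delta^{(k)}_{[b]}=0$ from \Cref{lem:Delta=0}, and count codimension coset by coset (the paper simply reduces to the single coset $\Z+\half$ first). Your explicit check that for $t=1$ the hyperplane condition $\sum_a D^{(1)}_a=0$ lies in the span of the $k=1$ degeneracy equations is a detail the paper's proof leaves implicit, and it is the right way to see that the count is unaffected in that case.
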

\begin{proof}
    For simplicity, we can assume that all the roots of $P$ lie in $\Z+\half$ and prove that $\dim \DPt=\delta_{[\nhalf]}(P)$.   Let $\displaystyle \pi=\max_{a'\in \Z+\nhalf} p_{a'}$.  The space of all possible choices of $D_a^{(\ell)}$ for $\ell=1,\dots, p_a$ and all $a\in \Z+\half$ is exactly $\sum_{a\in \Z+\nhalf} p_{a}$, and the statistics $\Delta^{(\ell)}_0=\sum_{j=-\infty}^{\infty} t^jD_{j+1/2}^{(\ell)}$ for $\ell=1,\dots, \pi$ are linearly independent functions. Thus, the dimension of their common kernel, which is the space of degenerate traces, is exactly $\delta_{[\nhalf]}(P)=\deg P-\pi$.  This completes the proof.
\end{proof}
This shows that the dimension of $\DPt$ can range anywhere from $0$ (if no distinct roots of $P$ differ by an element of $\Z$) to $\deg P-1$ (if all roots are simple and lie in the same coset of $\Z$).  

More explicitly, we see that we have coordinates on $\DPt$ given by the functions $D^{(k)}_{a}$ where $k$ ranges over integers such that $P$ vanishes to order $\geq k$ at $a$ and at $a+j$ for some $j\in \Z_{>0}$.

Now we discuss how we can compare the traces for different values of $P$.  
We have introduced an algebra homomorphism $\varphi_{Q_1,Q_2}\colon \A_{PQ_1Q_2}\to \A_{P}$ and considered pullback of twisted traces under it.  Since this map is the identity on $\C[z]$, it preserves the formal Stieltjes transform.    Since we can easily tell if a trace is 0 or degenerate from its formal Stieltjes transform, this shows:
\begin{thm}
    If $P_1(x)\mid P_2(x)\in\C[x]$, then the pullback by the homomorphism $\varphi_{Q_1,Q_2}$ introduced in \eqref{varphi-def} induces an injective map $\cC_{P_1,t}\hookrightarrow \cC_{P_2,t}$. This homomorphism preserves the formal Stieltjes transform.  The subspace $\D_{P_1,t}$ is the preimage of $\D_{P_2,t}$.  
\end{thm}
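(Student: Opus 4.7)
The plan is to break the theorem into three claims and verify each directly from material already assembled. First, that $\varphi_{Q_1,Q_2}^*$ is well-defined as a map $\cC_{P_1,t}\to\cC_{P_2,t}$ was already noted when $\varphi_{Q_1,Q_2}$ was introduced, since that algebra homomorphism intertwines $g_t$, so pullback of a $g_t$-twisted trace on $\A_{P_1}$ is automatically a $g_t$-twisted trace on $\A_{P_2}$.

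Next, I would verify that $\varphi_{Q_1,Q_2}^*$ preserves the formal Stieltjes transform. The key observation is simply that $\varphi_{Q_1,Q_2}(z)=z$, so $\varphi_{Q_1,Q_2}$ restricts to the identity on $\A_0=\C[z]$; hence
\[
(\varphi_{Q_1,Q_2}^*T)(z^n)=T(\varphi_{Q_1,Q_2}(z^n))=T(z^n)
\]
for every $n\ge0$, and therefore $F_{\varphi_{Q_1,Q_2}^*T}=F_T$ by \cref{def:FST}. Injectivity of $\varphi_{Q_1,Q_2}^*$ then reduces to injectivity of $T\mapsto F_T$ itself: if $F_T=F_{T'}$, then the polynomials $P(x)(F_T(x+\half)-tF_T(x-\half))$ and $P(x)(F_{T'}(x+\half)-tF_{T'}(x-\half))$ agree, so \cref{FS-injective} forces $T=T'$. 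Combined with the previous display, $\varphi_{Q_1,Q_2}^*T=\varphi_{Q_1,Q_2}^*T'$ gives $F_T=F_{T'}$ and hence $T=T'$.

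Finally, the preimage claim is a direct application of \cref{rational=degenerate} on both sides: $T\in\D_{P_1,t}$ iff $F_T$ is a rational function of $x$, iff $F_{\varphi_{Q_1,Q_2}^*T}$ is a rational function (since these two series coincide), iff $\varphi_{Q_1,Q_2}^*T\in\D_{P_2,t}$. There is essentially no obstacle: the whole theorem is a short formal consequence of the fact that $\varphi_{Q_1,Q_2}$ fixes $z$, together with \cref{FS-injective} and the rationality criterion of \cref{rational=degenerate}. None of the finer combinatorics from \cref{th:D-dimension}---cosets modulo $\Z$, root multiplicities, or the coefficients $D_a^{(k)}$---needs to enter the argument.
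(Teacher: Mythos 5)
Your proposal is correct and follows essentially the same route as the paper: use that $\varphi_{Q_1,Q_2}$ is the identity on $\C[z]$ to get $F_{T\circ\varphi}=F_T$, deduce injectivity from \cref{FS-injective}, and read off the degeneracy statement from the rationality criterion of \cref{rational=degenerate}. The only difference is that you spell out the intermediate steps (well-definedness of the pullback and the determination of $T$ by $F_T$) that the paper leaves implicit.
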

\begin{proof}
    Since the map $\varphi=\varphi_{Q_1,Q_2}$ acts by the identity on $\C[z]$, it preserves formal Stieltjes transforms: $F_T=F_{T\circ \varphi}$.  Thus, by Lemma~\ref{FS-injective}, pullback sends nonzero traces to nonzero traces, and thus is injective.  Similarly, by \cref{rational=degenerate}, the pullback of a trace is degenerate if and only if the original trace is.  
\end{proof}

Note that if $P_1\ne P_2$ then this map is never an isomorphism on the space of all traces, but it may be on the space of degenerate traces, which we can check by seeing if the dimensions are the same.  This happens when $\delta(P_1)=\delta(P_2)$.  This condition can be stated in a number of ways, but one way to say it is that if $a$ is a zero of different orders $p<p'$ for $P_1$ and $P_2$, then all other zeros in $a+\Z$ must have the same vanishing order $\leq p$ for $P_1$ and $P_2$.

Employing different decompositions of the rational function $F_T(x+\half)-tF_T(x-\half)$, we obtain different decompositions of our twisted trace, which are often interestingly compatible with this pullback.

For example, a rational function with a single pole of order $s$ at $a$ will give a twisted trace for $P(x)=(x-a)^s$, and as long as $t\neq 1$, we can write any twisted trace canonically as the sum of pullbacks of these traces, corresponding to the principal parts decomposition of $F_T(x+\half)-tF_T(x-\half)$.  However, this process is badly incompatible with degeneracy, since when $P$ has a single root, all nonzero traces will be nondegenerate.

More natural for us is to decompose according to the order of poles.  That is, we let:
\[D^{(k)}(x)=\sum_{a\in \C}\frac{D_a^{(k)}}{(x-a)^k}.\]
Of course, $F_T(x+\half)-tF_T(x-\half) =\sum_{k=1}^{\infty}D^{(k)}(x)$.  
\begin{lem}
    We have a unique decomposition $T=\sum_{k=1}^{\infty}T^{(k)}$ into twisted traces pulled back from $\A_{P^{(k)}}$ where $P^{(k)}=\prod (x-a)^{\min(p_a,k)}$, with the property that every term in the partial fraction expansion of $F_{T^{(k)}}$ has only poles of order $k$. The trace $T$ is degenerate if and only if $T^{(k)}$ is degenerate for all $k$.
\end{lem}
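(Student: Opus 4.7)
Proof plan. My plan is to read each component $T^{(k)}$ directly off the partial-fraction decomposition of the rational function
\[G(x):=F_T(x+\half)-tF_T(x-\half)=\sum_{k\geq 1}D^{(k)}(x)\]
obtained by grouping terms by pole order. Since the principal part of $G$ at $a$ has order at most $p_a$, we have $D_a^{(k)}=0$ whenever $k>p_a$, so the sum is finite and each product $P^{(k)}(x)D^{(k)}(x)$ is a polynomial of degree at most $\deg P^{(k)}-k$. For $k\geq 2$ this is automatically $\leq \deg P^{(k)}-2$; and for $k=1,t=1$, the sum $\sum_a D_a^{(1)}$ equals the coefficient of $x^{-1}$ in $G$, which one computes to be $(1-t)T(1)=0$. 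In every case \Cref{FS-injective} applied to $\A_{P^{(k)}}$ then produces a unique twisted trace $T^{(k)}$ on $\A_{P^{(k)}}$ whose Stieltjes transform satisfies $F_{T^{(k)}}(x+\half)-tF_{T^{(k)}}(x-\half)=D^{(k)}(x)$, and whose partial fraction data has only poles of order $k$ by construction.

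Next I would pull each $T^{(k)}$ back to $\A_P$ along the homomorphism $\varphi=\varphi_{Q_1,Q_2}\colon\A_P\to\A_{P^{(k)}}$ from \eqref{varphi-def}. Since $\varphi$ acts as the identity on $\C[z]\subset\A_P$, the pullback preserves the formal Stieltjes transform and is independent of the choice of $Q_1,Q_2$ (twisted traces being determined by their restriction to $\C[z]$). Thus the finite sum $T':=\sum_k T^{(k)}\circ\varphi$ satisfies $F_{T'}(x+\half)-tF_{T'}(x-\half)=\sum_k D^{(k)}(x)=G(x)$, and applying \Cref{FS-injective} on $\A_P$ to $T-T'$ gives $T=T'$. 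Uniqueness follows in the same way: any decomposition of the stated form produces a partial-fraction decomposition of $G$ by pole order, which is unique, so the $D^{(k)}$ and hence (by \Cref{FS-injective}) the $T^{(k)}$ are forced.

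For the degeneracy equivalence, I would invoke \Cref{lem:Delta=0}: $T$ is degenerate iff $\Delta^{(k)}_a=\sum_j t^j D_{a+j+1/2}^{(k)}$ vanishes for every $a$ and $k$. The Stieltjes data of $T^{(k)}$ has nonzero $D$-coefficients only in pole order $k$, so for fixed $k$ the conditions $\Delta^{(k)}_a=0$ (for all $a$) are precisely the degeneracy criterion for $T^{(k)}$ on $\A_{P^{(k)}}$. Splitting the $\Delta$-conditions for $T$ by pole order thus decouples them into separate conditions on each $T^{(k)}$, giving the claimed equivalence. The only mildly delicate step is the boundary case $k=1,t=1$ in the first paragraph; apart from that, the argument is a direct bookkeeping translation between $T$ and its Stieltjes data.
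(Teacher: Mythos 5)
Your proposal is correct and follows essentially the same route as the paper: characterize each $T^{(k)}$ by the functional equation $F_{T^{(k)}}(x+\half)-tF_{T^{(k)}}(x-\half)=D^{(k)}(x)$ via \Cref{FS-injective}, check the extra condition $\sum_a D^{(1)}_a=0$ in the case $t=1$, and decouple the degeneracy criterion of \Cref{lem:Delta=0} by pole order. You in fact supply more detail than the paper does (the degree count for $P^{(k)}D^{(k)}$ and the explicit pullback/injectivity step), but the underlying argument is the same.
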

\begin{proof}
    We can characterize these traces uniquely by $F_{T^{(k)}}(x+\half)-tF_{T^{(k)}}(x-\half)= D^{(k)}(x)$.  If $t\neq 1$, any rational function which only has poles at the right points corresponds to a twisted trace.  If $t=1$, we need the additional condition $\sum_{a\in\C}D^{(1)}_a=0$.  This holds for $D^{(1)}$ if and only if it holds for $F_T(x+\half)-tF_T(x-\half)$, and is automatic for $D^{(k)}$ for $k>1$.  
    
    The condition $\Delta_a^{(k)}=0$ holds for $D^{(k)}(x)$ if and only if it does for $F_T(x+\half)-tF_T(x-\half)$, so Lemma~\ref{lem:Delta=0} shows that $T$ is degenerate if and only if $T^{(k)}$ is degenerate for all $k$.  
\end{proof}

Notice that we can also uniquely characterize the twisted traces in the previous lemma by simply taking the partial fraction expansion of $F_T$ and extracting the terms whose poles are of order $k$ to be $F_{T^{(k)}}$, and this implies that $T^{(k)}$ is a degenerate twisted trace on $P^{(k)}$.

There are various possibilities for decomposing further: for example, for $a\in\C$ and $j\in\Z_{\ge0}$, if $a,a+j$ are roots of order $\geq k$ of $P$ with no roots of order $\geq k$ in $a+1,\dots, a+j-1$, the sum \[\frac{C_{a+1/2}^{(k)}}{{(x-a-1/2)}^k}+\cdots + \frac{C_{a-j-1/2}^{(k)}}{{(x-a-j+1/2)}^k} \] is the Stieltjes transform of a degenerate twisted trace for $P=(x-a)^k(x-a-j)^k$, and $T^{(k)}$ can be written uniquely as the sum of these traces.  

Alternatively, if $a$ and $a+j$ are as above, then we have a degenerate trace $T'$ with $F_{T'}(x+\half)-tF_{T'}(x-\half)=\sum_{i=1}^k D_a^{(k)}\left(\frac{1}{(x-a)^i}-\frac{t^{-j}}{(x-a-j)^i}\right)$ such that $T''=T-T'$ is a degenerate twisted trace such that $F_{T''}(x+\half)-tF_{T''}(x-\half)$ has trivial principal part at $x=a$.  Applying this inductively, we can write any degenerate trace as a sum of traces pulled back from $\A_{P'}$ where $P'$ has two roots with integer difference.

\section{Strong nondegeneracy}

We now consider how similar ideas can be applied to study $n$-degeneracy.  
A useful fact to note is that any formal series $F_T(x)\in x^{-1}\C\llbracket x^{-1}\rrbracket$ has a unique $[n-1/n]$-Pad\'e approximant at $x=\infty$ 
\[\frac{R_{n-1}(x)}{S_n(x)}\approx F_T(x)\]
for $S_n$ a monic polynomial of degree $\leq n$ and $\deg R_{n-1} <\deg S_n$.  
This rational function is uniquely characterized by the equivalent conditions
\[F_T(x)-\frac{R_{n-1}(x)}{S_n(x)}\in x^{-2n-1}\C\llbracket x^{-1}\rrbracket  \qquad S_n(x)F_T(x)-R_{n-1}(x)\in x^{-n-1}\C\llbracket x^{-1}\rrbracket\]  These conditions show that $R_{n-1}(x)$ is determined by $S_n(x)F_T(x)$, so the polynomial $S_n(x)$ is determined by the condition that the $x^{-1-k}$ coefficients of $S_n(x)F_T(x)$ vanish for $k=0,\dots, n-1$.  These coefficients can also be interpreted as the traces $T(S_n(z)z^k)$.
That is, $S_n(x)$ is the unique polynomial of minimal degree such that $S_n(x)$ is orthogonal to all polynomials of degree $< n$.  There must be such a polynomial of degree $n$ for dimension reasons, but it might be that in some cases, there is one of lower degree, in which case $T$ is $(n-1)$-degenerate.  That is:
\begin{lem}\label{lem:Pade}
    The trace is $n$-degenerate if and only if $\deg S_{n+1}\leq n$, that is, if $S_{n+1}=S_n$ or equivalently $R_{n}/S_{n+1}=R_{n-1}/S_n$.  
\end{lem}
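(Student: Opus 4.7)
My plan is to prove the three equivalent characterizations by combining an unpacking of the definitions with the uniqueness of Padé approximants.

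The first equivalence, that $T$ is $n$-degenerate if and only if $\deg S_{n+1} \leq n$, follows directly from the definitions. The trace $T$ is $n$-degenerate precisely when there exists a nonzero polynomial $p$ of degree $\leq n$ with $T(p(z) z^k) = 0$ for all $k = 0, \ldots, n$; this is equivalent to the perpendicularity space $\{q \in \C_n[x] : T(qz^k)=0 \text{ for } k=0,\ldots,n\}$ being nonzero, which by the minimum-degree monic characterization of $S_{n+1}$ is equivalent to $\deg S_{n+1} \leq n$. The forward direction takes $S_{n+1}$ itself as a witness; the reverse uses minimality of $\deg S_{n+1}$.

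For the Padé equivalence $R_n/S_{n+1} = R_{n-1}/S_n$, I would argue that $\deg S_{n+1} \leq n$ implies that $(R_n, S_{n+1})$ itself satisfies the $[n-1/n]$-Padé conditions: we have $\deg R_n < \deg S_{n+1} \leq n$, and the approximation order $S_{n+1}(x) F_T(x) - R_n(x) \in x^{-n-2}\C\llbracket x^{-1} \rrbracket$ is stronger than the $x^{-n-1}\C\llbracket x^{-1}\rrbracket$ condition required by $[n-1/n]$. By the uniqueness of the $[n-1/n]$-Padé approximant as a rational function, $R_n/S_{n+1} = R_{n-1}/S_n$.

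Finally, to obtain the polynomial equality $S_{n+1} = S_n$: since $S_{n+1}$ is perpendicular to all polynomials of degree $\leq n$, it is in particular perpendicular to those of degree $<n$, so $S_{n+1}$ lies in the space defining $S_n$. By minimality of $\deg S_n$, one gets $\deg S_n \leq \deg S_{n+1}$; the Padé identity combined with minimality of $\deg S_{n+1}$ then forces $\deg S_n = \deg S_{n+1}$, and uniqueness of the minimum-degree monic polynomial in this space yields $S_n = S_{n+1}$. The most delicate step is this last polynomial identification, since rational-function equality does not a priori imply equality of polynomial representatives—one must invoke both minimum-degree conditions simultaneously to make the identification rigorous.
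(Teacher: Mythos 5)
Your first equivalence ($T$ is $n$-degenerate iff $\deg S_{n+1}\leq n$) is precisely the paper's justification: the lemma is stated there as an immediate consequence of the observation that $S_{n+1}$ is the minimal-degree monic polynomial with $T(S_{n+1}(z)z^k)=0$ for $k=0,\dots,n$, and your unpacking of that is correct. Your derivation of the rational-function identity $R_n/S_{n+1}=R_{n-1}/S_n$ from Frobenius-type uniqueness of the $[n-1/n]$ approximant is also fine. The one genuine gap is the step you yourself flag as delicate: you assert that ``the Padé identity combined with minimality of $\deg S_{n+1}$ forces $\deg S_n=\deg S_{n+1}$,'' but equality of rational functions only says the two denominators have the same reduced form (one could a priori be a proper multiple of the other), and minimality of $\deg S_{n+1}$ is minimality among monic polynomials perpendicular to \emph{all} of $\C_n[x]$ --- to invoke it against $S_n$ you must first show $T(S_n(z)z^n)=0$, which is not part of the defining property of $S_n$ and does not follow from anything you have written down.

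The missing computation is short. Cross-multiplying the identity $R_{n-1}S_{n+1}=R_nS_n$ gives
\[
S_{n+1}\bigl(S_nF_T-R_{n-1}\bigr)=S_n\bigl(S_{n+1}F_T-R_n\bigr)\in x^{\deg S_n}\cdot x^{-n-2}\,\C\llbracket x^{-1}\rrbracket .
\]
Dividing by the monic polynomial $S_{n+1}$, whose degree is at least $\deg S_n$, yields $S_nF_T-R_{n-1}\in x^{-n-2}\C\llbracket x^{-1}\rrbracket$, i.e.\ $T(S_n(z)z^k)=0$ for $k=0,\dots,n$. Now $S_n$ is a monic polynomial perpendicular to all of $\C_n[x]$, so minimality of $\deg S_{n+1}$ gives $\deg S_{n+1}\leq\deg S_n$; combined with your reverse inequality this forces equality of degrees, and your difference-and-uniqueness argument then correctly yields $S_n=S_{n+1}$. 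With this insertion the proof is complete and consistent with the paper's (essentially proofless) treatment.
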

For any value of $n$, it is easy to construct examples of nondegenerate traces which are nonetheless $n$-degenerate:  For any fixed $\frac{R}{S}$ with $\deg R<\deg S<n$, we can consider the twisted trace with $F_T(x)=\frac{R(x+\half)}{S(x+\half)}-t\frac{R(x-\half)}{S(x-\half)}+\frac{1}{(x-a)^k}$ for some $k\gg n$, which has $S_n=S_{n+1}=\cdots =S_{k-2}=S$, but this trace is nondegenerate.   

It seems to be a very complicated problem to compute exactly which traces are $n$-degenerate for a fixed $n$.  We hope that some progress can be made from the observation that:
\begin{align}
    \label{approx-recursion1}
    \frac{R_{n-1}(x+\half )}{S_n(x+\half )}-t\frac{R_{n-1}(x-\half )}{S_n(x-\half )}&\in \frac{Q(x)}{P(x)}+x^{-2n-1}\C\llbracket x^{-1}\rrbracket & t&\neq 1\\
    \label{approx-recursion2}
        \frac{R_{n-1}(x+\half )}{S_n(x+\half )}-t\frac{R_{n-1}(x-\half )}{S_n(x-\half )}&\in \frac{Q(x)}{P(x)}+x^{-2n-2}\C\llbracket x^{-1}\rrbracket & t&=1
\end{align}
where $Q(x)=P(x)(F_T(x+\half)-tF_T(x-\half))\in\C[x]$.

\begin{thm}\label{Pade-degenerate}
    The polynomials $R_{n-1}(x)$ and $S_{n}(x)$ are the unique polynomials such that $\deg R_{n-1}<\deg S_n\leq n$, $\gcd(R_{n-1},S_n)=1$, $S_n$ is monic and \eqref{approx-recursion1} or 
    \eqref{approx-recursion2} holds, depending on whether $t\neq 1$ or $t=1$.
\end{thm}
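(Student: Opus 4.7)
The plan is to prove the theorem in two parts. First, I would verify that the Padé approximants $R_{n-1}, S_n$ themselves satisfy the stated conditions. Second, I would show that any other pair $(\tilde R, \tilde S)$ satisfying the conditions must give a rational function $\tilde R/\tilde S$ with $F_T - \tilde R/\tilde S \in x^{-2n-1}\C\llbracket x^{-1}\rrbracket$; this is the defining property of the $[n-1/n]$ Padé approximant, so by the uniqueness stated above \Cref{lem:Pade}, we would get $\tilde R = R_{n-1}$ and $\tilde S = S_n$.

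For existence, set $E_0(x) := F_T(x) - R_{n-1}(x)/S_n(x) \in x^{-2n-1}\C\llbracket x^{-1}\rrbracket$. A direct binomial expansion $(x \pm \half)^{-k} = \sum_{j \geq 0} \binom{-k}{j}(\pm\half)^j x^{-k-j}$ shows that the substitution $x \mapsto x \pm \half$ preserves the filtration by order of vanishing at infinity, so $E_0(x+\half) - tE_0(x-\half) \in x^{-2n-1}\C\llbracket x^{-1}\rrbracket$. Combining this with the identity $F_T(x+\half) - tF_T(x-\half) = Q(x)/P(x)$ (as Laurent expansions at infinity) yields \eqref{approx-recursion1}. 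For $t = 1$, the cancellation of leading terms in $(x+\half)^{-k} - (x-\half)^{-k}$ shows that this difference in fact lies in $x^{-k-1}\C\llbracket x^{-1}\rrbracket$, improving the order by one and yielding \eqref{approx-recursion2}.

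For uniqueness, let $E(x) := F_T(x) - \tilde R(x)/\tilde S(x) = \sum_{k \geq 1} e_k x^{-k}$ and compute the coefficient of $x^{-k}$ in $E(x+\half) - tE(x-\half)$ via the same binomial expansion. The result takes the form $(1-t)e_k + \sum_{k' < k} c_{k,k'} e_{k'}$ for certain explicit coefficients $c_{k,k'}$. For $t \neq 1$ the hypothesis \eqref{approx-recursion1} forces this coefficient to vanish for $k \leq 2n$, so the triangular system with nonzero diagonal entries $1-t$ can be solved inductively in $k$ to give $e_1 = \cdots = e_{2n} = 0$. For $t = 1$ the diagonal entry vanishes, but the next subdiagonal entry evaluates to $-(k-1)$ (nonzero for $k \geq 2$), and the strengthened hypothesis \eqref{approx-recursion2} supplies one extra vanishing coefficient, so an analogous induction shifted by one index again gives $e_1 = \cdots = e_{2n} = 0$. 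In both cases $E \in x^{-2n-1}\C\llbracket x^{-1}\rrbracket$, as needed. The main obstacle is precisely this combinatorial bookkeeping in the $t=1$ case, where one must confirm that the strengthened order of vanishing in \eqref{approx-recursion2} is exactly the right amount to compensate for the loss of the diagonal entry in the recursion.
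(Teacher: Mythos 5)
Your proposal is correct and follows essentially the same route as the paper: form the error series $E = F_T - \tilde R/\tilde S$, observe that the hypotheses force $E(x+\half)-tE(x-\half)$ to vanish to order $2n+1$ (resp.\ $2n+2$ when $t=1$), and deduce $E\in x^{-2n-1}\C\llbracket x^{-1}\rrbracket$. The paper phrases this last step by examining only the leading nonzero coefficient $a_k$ of $E$ (whose image in the twisted difference is $(t-1)a_k$, or $-(k+1)a_k$ one order lower when $t=1$), which is just the diagonal (resp.\ first subdiagonal) entry of your triangular system, so the two arguments coincide.
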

\begin{proof}
    If all these conditions hold for some polynomials $r,s$, then $\bar{F}(x)=F_T(x)-r(x)/s(x)$ is a power series that satisfies 
    \[\bar{F}(x+\half)-t\bar{F}(x-\half)=\frac{Q(x)}{P(x)}- \frac{r(x+\half )}{s(x+\half )}-t\frac{r(x-\half )}{s(x-\half )}\in x^{-2n-1}\C\llbracket x^{-1}\rrbracket. \]  If $t=1$, we can further conclude that $\bar{F}(x+\half)-t\bar{F}(x-\half)\in x^{-2n-2}\C\llbracket x^{-1}\rrbracket$.  

    Let $k+1$ for $k\geq 0$ be the order of vanishing of $\bar{F}(x)=a_{k}x^{-k-1}$ at $x=\infty$.  Taylor expanding $(x\pm \half)^{-k}=x^{-k}\pm \frac{k+1}{2}x^{-k-1}+\cdots$ at $x=\infty$, we find that \[\bar{F}(x+\half)-t\bar{F}(x-\half)=(t-1)a_kx^{-k-1}+((1-t)a_{k+1}-(k+1)a_k ) x^{-k-2}\] vanishes to order $k+1$ and order $k+2$ if $t=1$,  so we must have $k\geq 2n$.  This proves that $r(x)/s(x)=R_{n-1}(x)/S_n(x)$ is the expected Pad\'e approximant, and the conditions on these polynomials guarantee that $r(x)=R_{n-1}(x)$ and $s(x)=S_n(x)$.  
\end{proof}
It is a very interesting question how we might be able to find the principal parts expansion of $R_{n-1}(x)/S_n(x)$ from that of $\frac{Q(x)}{P(x)}$. At the moment, we see no practical way of doing this.

\section{Finite-dimensional modules}
One interesting source of degenerate twisted traces is finite-dimensional modules over $\A$.  Assume $M$ is such a finite-dimensional module and $\alpha\colon M\to M$ is a map satisfying 
\begin{equation}\label{eq:twisting}
    a\cdot \alpha(m)=\alpha(g_t(a)\cdot m)\qquad \text{for all}\quad a\in\A, m\in M.
\end{equation} Such an $\alpha$ is called a \textbf{twisting map}. We call the map $\A\to \C$ defined by $a\mapsto \operatorname{tr}(M,\alpha\circ a)$ the {\bf twisted trace of the module $M$ and map $\alpha$.}  This is manifestly a twisted trace in the sense of Definition~\ref{def:twisted-trace} since $\operatorname{tr}(M,\alpha\circ ab)=\operatorname{tr}(M,b\circ \alpha\circ a)=\operatorname{tr}(M,\alpha\circ g_t(b)a)$.  

\begin{lem}\label{lem:finite-dimensional}
    A degenerate twisted trace is a twisted trace of a finite-dimensional module if and only if $C_a^{(k)}=0$ for all $a$ and all $k>1$, or equivalently $D_a^{(k)}=0$ for all $a$ and all $k>1$.
    
    In particular, every degenerate trace is of this form if and only if $P$ has no multiple roots.
\end{lem}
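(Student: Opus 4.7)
My plan is to prove each direction of the main biconditional and then derive the ``in particular'' clause from the bound $m_a \le n_a$ established just before. The equivalence $C_a^{(k)}=0 \iff D_a^{(k)}=0$ for $k>1$ follows immediately from \eqref{eq:C-D}: one direction is direct, and the converse uses that $C^{(k)}$ has finite support to propagate the recurrence $C^{(k)}_{a-\half}=tC^{(k)}_{a+\half}$ to all of $\C$.

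For the ``only if'' direction, let $T(a)=\operatorname{tr}_M(\alpha\circ a)$ for a finite-dimensional module $M$ with twisting $\alpha$. Since $g_t(z)=z$, the map $\alpha$ commutes with $z$, hence with $N:=z-\lambda$ on each generalized $z$-eigenspace $M_\lambda$. Expanding $(x-z)^{-1}$ in powers of $N$ gives $C_\lambda^{(k+1)}=\operatorname{tr}(\alpha N^k|_{M_\lambda})$. For $k\ge 1$, the operator $\alpha N^k$ is a product of commuting factors of which $N^k$ is nilpotent, so $\alpha N^k$ is itself nilpotent and hence traceless; thus $C_\lambda^{(k)}=0$ for every $k\ge 2$.

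For the ``if'' direction, assume $F_T(x)=\sum_\lambda c_\lambda/(x-\lambda)$ has only simple poles. Condition \eqref{eq:t-condition} reads $c_{a-\half}=tc_{a+\half}$ whenever $a$ is not a root of $P$, so in each $\Z$-coset of $\C$ the nonzero residues decompose into maximal strings $\{\lambda_0,\lambda_0+1,\dots,\lambda_0+n\}$ along which $c_{\mu+1}=t^{-1}c_\mu$ and whose boundaries sit at roots of $P$ (shifted by $\pm\half$). For each such string I would build an $(n+1)$-dimensional string module $M^{(s)}$ with one-dimensional eigenspaces $M^{(s)}_{\lambda_0+j}\cong\C$, realizing $u,v$ as scalars satisfying $uv|_{M^{(s)}_\mu}=P(\mu-\half)$ and $vu|_{M^{(s)}_\mu}=P(\mu+\half)$; the string's boundary conditions guarantee solvability. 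Setting $\alpha|_{M^{(s)}_{\lambda_0+j}}:=c_{\lambda_0+j}$, the twisting relation $v\alpha=t\alpha v$ reduces to the ratio $c_{\mu+1}=t^{-1}c_\mu$, so $\alpha$ is well-defined, and $\bigoplus_s M^{(s)}$ realizes $T$.

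The ``in particular'' clause follows from $m_a\le n_a$: if $P$ has no multiple roots then $n_a^{\pm}\le 1$, so $m_a\le 1$ and every degenerate trace has $C_a^{(k)}=0$ for $k>1$. Conversely, from two roots of multiplicity $\ge 2$ in the same $\Z$-coset, one constructs principal-part data with some $D_a^{(k)}\ne 0$ for $k>1$ satisfying \Cref{lem:Delta=0}, exhibiting a degenerate trace with a higher-order pole. The main obstacle is the string-module construction in the ``if'' direction: verifying that the maximality of each string furnishes exactly the boundary roots of $P$ needed to define $u,v$ consistently, and that these modules combine into a single finite-dimensional module on which the $\alpha$'s assemble into a coherent twisting.
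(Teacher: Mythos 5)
Your proposal is correct, and the forward direction takes a genuinely different route from the paper's. To show that the trace of a finite-dimensional module has only simple poles, the paper reduces to irreducibles: it finds a simultaneous eigenvector of $z$ and $\alpha$ killed by $u$, shows the submodule it generates is $\alpha$-stable, and computes $F_T$ explicitly on each simple $S_{a,a+j}$. You instead work directly on an arbitrary $(M,\alpha)$: since $\alpha$ commutes with $z$ it preserves each generalized eigenspace $M_\lambda$, expanding $(x-z)^{-1}$ in the nilpotent part $N=z-\lambda$ gives $C_\lambda^{(k+1)}=\operatorname{tr}(\alpha N^k|_{M_\lambda})$, and for $k\ge 1$ this vanishes because $\alpha N^k$ is a product of commuting operators one of which is nilpotent, hence is itself nilpotent. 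This is shorter, avoids the classification of simples for this direction, and makes transparent why only first-order poles occur even when $z$ fails to act semisimply. The converse direction is essentially the paper's argument made explicit: both realize the trace on string modules with one-dimensional $z$-eigenspaces on which $\alpha$ acts by the geometric progression $c_{\mu+1}=t^{-1}c_\mu$. The verification you flag as the remaining obstacle does go through: by \Cref{Stieltjes-change}, a maximal string of residues can terminate only where the relation $C^{(1)}_{a-1/2}=tC^{(1)}_{a+1/2}$ is exempted, i.e.\ where $a$ is a root of $P$, which is exactly the existence condition for the simple $S_{\lambda_0-\half,\lambda_0+n+\half}$ (breaking additionally at any interior roots so that each segment has consecutive roots of $P$ as endpoints). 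One further remark: your converse for the final clause correctly requires \emph{two} roots of multiplicity $\ge 2$ in the same $\Z$-coset, since by the bound $m_a\le n_a$ a single multiple root with no multiple $\Z$-translate partner forces $n_a\le 1$ and hence only simple poles; this is the accurate criterion, and is slightly finer than the lemma's literal phrasing (e.g.\ for $P=(x-a)^2$ the only degenerate trace is zero, so the ``only if'' of the last clause as stated is vacuously violated).
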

\begin{proof}
    The finite-dimensional representations of this algebra are well-known: if $a,a+j$ are roots of $P$ with no roots in $a+1,\dots, a+j-1$, then there is a unique irreducible representation $S_{a,a+j}$ where the element $z$ has simple spectrum $\{a+\half,\dots, a+j-\half\}$, and all irreducible representations are of this form.   

    Given a finite-dimensional module $M$ over $\A$ and a twisting map $\alpha\colon M\to M$, since $\alpha$ and $z$ commute, they must have a simultaneous eigenvector $x$ with eigenvalues $b$ and $\lambda$.  Note that $ux,vx$ are also eigenvectors with eigenvalues $b\mp 1$ and $t^{\pm 1}\lambda $. Since $z$ has finite spectrum, we must have $v^mx=u^mx=0$ for some $m$. If for some $q$, we have $uv^qx\ne0$, we replace $x$ by $v^qx$ for the largest such $q$.  If no such $q$ exists, we replace $x$ by $u^sx$ where $s$ is maximal amongst integers such that $u^sx\neq 0$.  Thus, we assume that $ux=0$, and the assumptions above guarantee that $x$ generates a copy of $S_{b-1/2,b+j+1/2}$ for some $j$.  Furthermore, this submodule has a basis given by $\{x,vx,\dots, v^{j-1}x\}$, so it is invariant under $\alpha$.  Thus \[\operatorname{tr}(M,\alpha\circ a)= \operatorname{tr}(M/\A x,\alpha\circ a)+ \operatorname{tr}(\A x,\alpha\circ a)\] and we can reduce to assuming $M$ is irreducible, and $\alpha$ and $z$ have a basis of simultaneous eigenvectors.  We can explicitly calculate that \[F_T(x)=\sum_{n=0}^{\infty} x^{-n-1}\sum_{s=0}^{j-1}\lambda t^{-s}(b+s)^n= \sum_{s=0}^{j-1}\frac{\lambda t^{-s}}{x-a}.\] In the notation introduced earlier, this is the twisted trace corresponding to $C_b^{(1)}=tC_{b+1}^{(1)}=\cdots =t^jC_{b+j}^{(1)}=\lambda$ and all other coefficients 0.  This shows that a twisted trace of a finite-dimensional module must have this form and, in particular, has $C_a^{(k)}=0$ for $k>1$.
    
On the other hand, we have shown that a degenerate twisted trace with $C_a^{(k)}=0$ whenever $k>1$ is a sum of the traces above, and thus it can be written as a twisted trace on a sum of simples with an appropriate choice of $\alpha$. 
\end{proof}

It is possible to deal with the higher order poles by loosening the restriction \eqref{eq:twisting}.
Consider the case where 
\begin{align}
    F_T(x)&=C\sum_{j=0}^{n-1} \frac{t^j}{(x-a-j)^k}\label{one-FT}\\ 
    F_{T}(x+\half)-tF_{T}(x-\half)&=\frac{C}{(x-a+\half)^k} -\frac{Ct^n}{(x-a-n+\half)^k},\notag
\end{align} so $a-\half$ and $a+n-\half$ are both $k$-fold roots of $P$.  We can define a finite-dimensional representation $M$ with a basis $e_j^{(p)}$ for $j=0,\dots, n-1$ and $p=1,\dots, k$ (in all equations below, whenever an index is outside these ranges, $e_j^{(p)}=0$ by convention), with the action:
\begin{equation*}
    z\cdot e_j^{(p)}=(a+j)e_j^{(p)}+e_j^{(p+1)} \qquad u\cdot e_j^{(p)} =P(z-\half )e_{j-1}^{(p)} \qquad v\cdot e_j^{(p)}= e_{j+1}^{(p)}
\end{equation*}
This is the same as the representation generated by the element $e_0^{(1)}$ subject to the relations
\begin{equation*}
    (z-a)^k\cdot e_0^{(1)}=0\qquad u\cdot e_0^{(1)}=0 \qquad v^{n}\cdot e_0^{(1)}=0.
\end{equation*}
We can define a map $\alpha \colon V\to V$ sending $e_j^{(k)}\mapsto Ct^je_j^{(k-p+1)}$; note that $z\cdot\alpha(m)\neq\alpha(z\cdot m)$, so this does not satisfy \eqref{eq:twisting}.  Despite this, one can calculate that $T(a)=\operatorname{tr}(M,\alpha\circ a)$ is a twisted trace with $F_T(z)$ given by \eqref{one-FT}.  Summing over several representations, we can get any degenerate trace as $T(a)=\operatorname{tr}(M,\alpha\circ a)$ for some $(M,\alpha)$.  It's not clear to us in general how to test if a map $\alpha$ gives a twisted trace.

\section{Lerch transcendents}

A natural question raised by our calculations with Stieltjes transforms is whether we can interpret $F_T(x)$ as an analytic function for any nondegenerate traces.  In a certain sense, the answer is obviously not:
\begin{lem}
    If $F_T(x)$ is not the Taylor expansion of a rational function at $x=\infty$, then the radius of convergence (in terms of $x^{-1}$) of this power series is $0$.  
\end{lem}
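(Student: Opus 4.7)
The plan is to prove the contrapositive: if the formal series $F_T(x) \in x^{-1}\C\llbracket x^{-1}\rrbracket$ has positive radius of convergence (in $x^{-1}$), then $F_T$ must be the Taylor expansion at $x = \infty$ of a rational function. Combined with \Cref{rational=degenerate}, this also shows $T$ is degenerate, which is equivalent to the statement of the lemma.

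First I would observe that a positive radius of convergence $\rho > 0$ means $F_T$ converges to an analytic function $\hat F_T$ on $\{|x| > R\}$, where $R = 1/\rho$, and that $\hat F_T(x) \to 0$ as $|x| \to \infty$ since the series has no constant term. On the sub-region where $x$, $x + \half$, and $x - \half$ all have modulus greater than $R$, the functional equation $P(x)(F_T(x+\half) - t F_T(x-\half)) = Q(x)$ from \cref{FS-injective} holds as an identity between analytic functions, not just between formal power series.

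Next I would use this equation to analytically continue $\hat F_T$ to a meromorphic function on all of $\C$. Solving for $\hat F_T(y - \half)$ (using $t \neq 0$),
\[
\hat F_T(y - \half) = t^{-1}\hat F_T(y + \half) - \frac{Q(y)}{t P(y)},
\]
lets us iteratively extend $\hat F_T$ from $\{|x| > R\}$ onto each region $\{|x + n| > R\}$ for $n \geq 1$. The union of these regions over $n \geq 0$ is all of $\C$, and by the identity theorem these extensions glue to a single meromorphic function whose only potential poles lie in the discrete set $\{a - j - \half : a \text{ a root of } P,\ j \geq 0\}$.

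Finally, since $\hat F_T$ is analytic on $\{|x| > R\}$ by hypothesis, every actual pole of the extended function must sit inside the bounded disk $\{|x| \leq R\}$, so there are only finitely many. Subtracting their principal parts from $\hat F_T$ gives an entire function which still tends to $0$ at infinity, and hence vanishes by Liouville's theorem; therefore $\hat F_T$ equals the sum of its principal parts and is a rational function. I expect the main subtlety to be verifying that the iterative continuation really yields a globally consistent meromorphic function whose singularities are isolated poles contained in the prescribed discrete set (rather than something pathological from the functional equation being applied inconsistently on overlaps), but this should follow routinely from applying the identity theorem to each pair of overlapping extensions.
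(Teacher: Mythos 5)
Your proposal is correct and follows essentially the same route as the paper: both use the functional equation $P(x)(F_T(x+\half)-tF_T(x-\half))=Q(x)$ to meromorphically continue the convergent sum of the series from a neighbourhood of $\infty$ to the whole plane, observe that the possible poles form a discrete set so only finitely many (all of bounded order) can occur, and conclude rationality. The only cosmetic differences are that the paper argues by contradiction on the Riemann sphere while you prove the contrapositive and finish with Liouville after subtracting principal parts.
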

\begin{proof}
    Assume not.  Then for some real number $\epsilon$, we have that $F_T(x)$ converges absolutely to a holomorphic function on $\{x\in \C\mid |x|>\epsilon\}$, and in fact on its closure in the Riemann sphere.  
    We can use \eqref{eq:P-principal} to meromorphically extend this function to the entirety of the Riemann sphere.  
    The argument of Lemma~\ref{Stieltjes-change} still applies and shows that this function can only have poles at points of the form $a+\half+\Z$ for $a$ a root of $P$ which lies inside $\{x\in \C\mid |x|\leq \epsilon\}$, with the order of these poles bounded by the vanishing order of $P$ at $x=a$.   
    There are only finitely many such points, which shows that $F_T(x)$ is a meromorphic function on the Riemann sphere with finitely many poles, all of finite order.  This implies that $F_T(x)$ is a rational function. Thus, the twisted trace $T$ is degenerate, contradicting our assumption.  
\end{proof}

However, we can give an interpretation of this divergent series using the Borel transform. Recall that the Borel transform (see \cite[\S 4.4]{costinAsymptoticsBorel2009}) is the transformation of power series $\mathcal{B}\colon x^{-1}\C[[x^{-1}]]\to \C[[s]]$ given by:
\[\mathcal{B}(\sum_{n=0}^{\infty}a_nx^{-n-1})=\sum_{n=0}^\infty \frac{a_n}{n!}s^n.\]

It is a well-known result (often called ``frequency shifting'' in the context of the Laplace transform) that:
\begin{multline*}
\mathcal{B}\left(\sum_{n=0}^{\infty}a_n(x+b)^{-n-1}\right)=\mathcal{B}\left(\sum_{n=0}^{\infty}\sum_{m=0}^{\infty} a_n\binom{n+m+1}{m}\frac{(-b)^m}{x^{n+m+1}}\right)\\=\sum_{n=0}^{\infty}\sum_{m=0}^{\infty} a_n\frac{(-b)^ms^{n+m}}{m!(n+1)!}=\left(\sum_{m=0}^{\infty}\frac{(-bs)^{m}}{m!}\right)\left(\sum_{n=0}^{\infty} a_n\frac{s^{n}}{(n+1)!}\right)=e^{-bs}\mathcal{B}\left(\sum_{n=0}^{\infty}a_nx^{-n-1}\right)
\end{multline*}
Applying this to the equation \eqref{eq:P-principal}, we find that 
\begin{align}\notag
(e^{-s/2}-te^{s/2})	\mathcal{B}(F_T)&=\sum_{a\in \C} {e^{as}}\sum_{k=1}^{p_a}\frac{D_a^{(k)}}{(k-1)!}s^{k-1}\\
	\mathcal{B}(F_T)&=\sum_{a\in \C} \frac{e^{as}}{e^{-s/2}-te^{s/2}}\sum_{k=1}^{p_a}\frac{D_a^{(k)}}{(k-1)!}s^{k-1}\label{eq:Borel-transform}
\end{align}
The right-hand side can only have poles at solutions to $t=e^{-s}$; of course, all these solutions have $\Re (s)=-\log|t|$.  

From this Borel transform, we can construct meromorphic solutions to this recursion by taking the Laplace transform along rays.    For $\phi\in [0,2\pi]$ we say that the ray through $e^{i\phi}$ is a {\bf Stokes ray}
if there is a solution of $t=e^{-s}$ for $\arg(s)=\phi$.   If we write $t=|t|e^{i\theta}$, these Stokes rays are the rays from the origin through the points $-\log |t| +i(\theta+2\pi k)$ for $k\in \Z$.  If $\cos\phi$ has the same sign at $\log |t|$, then $\phi$ cannot give a Stokes ray; put differently, every nonzero point on a Stokes ray has real part with the same sign as $-\log |t|$.  

If $\phi$ does not correspond to a Stokes ray,
we define the function:
\begin{equation}\label{eq:phi-transform}
    \tilde{F}_T^{\phi}(x)=\int_{0}^{\infty}e^{i\phi}\mathcal{B}(F_T)(s e^{i\phi})e^{-xe^{i\phi}s}ds.
\end{equation}
Let $\rho_P=\{a\in \C\mid P(a)=0\}$ be the roots of $P$ and $\rho^{\pm}_P=\{a\pm \half\pm\Z_{\geq 0}\}$.
Let $H_{\phi}=\{x\in \C\mid \Re(xe^{i\phi})>\Re((a\pm \half)e^{i\phi})\text{ for all }a\in \rho_P\}$.
\begin{thm}
    The integral $\tilde{F}_T^{\phi}(x)$ converges locally uniformly on $H_{\phi}$ to a holomorphic solution of 
    \begin{equation}\label{eq:tilde-recursion}
    \tilde{F}_T^{\phi}(x+\half)-t\tilde{F}_T^{\phi}(x-\half) =\frac{Q(x)}{P(x)}=\sum_{k=1}^{\infty}D^{(k)}(x)
    \end{equation}
    which can be meromorphically continued to the entirety of $\C$ with all poles in the set $\rho^{\mp}_P$ if $\pm \cos\phi>0$.   As a function of $\phi$ on $\mathbb R/2\pi \Z$, the function $\tilde{F}_T^{\phi}(x)$ is locally constant, with the value changing only at the argument of Stokes rays.  
\end{thm}
\begin{proof}
  For simplicity, assume that $\cos \phi\geq 0$; the case $\cos \phi\leq 0$ is easily dealt with by reversing some signs.   
    Most parts of this are standard results of Borel-Laplace theory.  In particular, if $x\in H_{\phi}$, then the integrand of \eqref{eq:phi-transform} can be written as \[ \sum_{a\in \C} e^{(-x +a+\half )e^{i\phi}s}\frac{e^{i\phi}}{e^{-e^{i\phi}s}-t}\sum_{k=1}^{p_a}\frac{D_a^{(k)}}{(k-1)!}(se^{i\phi})^{k-1}\] 
    The first exponential in each summand is decaying since $x\in H_{\phi}$, and the remaining terms are bounded above by a polynomial. 
    Thus, the function $ \tilde{F}_T^{\phi}$ is holomorphic on $H_{\phi}$ (see, for example, \cite[Prop. 2.7]{costinAsymptoticsBorel2009}).  The difference $\tilde{F}_T^{\phi}-\tilde{F}_T^{\phi'}$ can be written as a contour integral that surrounds all poles with argument between $\phi$ and $\phi'$, so if there are no Stokes rays in this region, the difference will be 0.  
    
    The recursion \eqref{eq:tilde-recursion} holds if $x\pm \half \in H_{\phi}$ because 
    \begin{align*}
        \tilde{F}_T^{\phi}(x+\half)-t\tilde{F}_T^{\phi}(x-\half) &=\int_{0}^{\infty}e^{i\phi}\bigg(\mathcal{B}(F_T)((s+\half) e^{i\phi})e^{-xe^{i\phi}(s+\half)}-\mathcal{B}(F_T)((s-\half) e^{i\phi})e^{-xe^{i\phi}(s-\half)}\bigg)ds\\
        &= \int_{0}^{\infty}e^{i\phi}e^{-xe^{i\phi}s}\sum_{a\in \C} e^{as}\sum_{k=1}^{p_a}\frac{D_a^{(k)}}{(k-1)!}s^{k-1}ds\\
        &=\int_{0}^{\infty}e^{i\phi}e^{-xe^{i\phi}s}\mathcal{B}\Big(\frac{Q(x)}{P(x)}\Big)ds\\
        &=\frac{Q(x)}{P(x)}
    \end{align*}
    where the last equality follows from the fact that if the Borel transform $\mathcal{B}(f)$ converges to an entire function, then $f$ converges in a neighborhood of $\infty$ to the Laplace transform along any ray; in particular, on $H_{\phi}$, applying Laplace transform termwise results in the Taylor series of $\frac{Q(x)}{P(x)}$ at $\infty$, which is uniformly convergent on $H_{\phi}$.   

    Finally, we can meromorphically continue $\tilde{F}_T^{\phi}$ to the rest of the complex plane by using \eqref{eq:tilde-recursion} as the definition---for every $x\in \C$, we have $x+n\in H_{\phi}$ for some $n\in \Z_{\geq 0}$, and so we can define
    \[\tilde{F}_T^{\phi}(x):=\tilde{F}_T^{\phi}(x+n)-\sum_{i=0}^{n-1} \frac{Q(x+i+\half)}{P(x+i+\half)}.\]
    This defines a holomorphic function on an open disk centered on $x$ if and only if the rational function $\frac{Q(x)}{P(x)} $ has no poles in $\{x+\half,\dots, x+n-\half\}$, that is, if $x\notin \rho^-_P$.  
\end{proof}

In some cases, we can write the functions $\tilde{F}^{\phi}_T$ in terms of well-known holomorphic functions.  Consider the {\bf Lerch transcendent} function
\[\Phi(t,n,x)=\sum_{j=0}^{\infty} \frac{t^j}{(x+j)^n}.\]  While frequently in the literature, this is considered as a meromorphic function of $n$, we will only consider the case where $n$ is a positive integer; this sum absolutely converges and thus defines a meromorphic function on the open domain $\C\setminus \Z_{\leq 0}$ if $|t|\leq 1$ and $n\geq 1$ and one of these inequalities is strict. If $t\notin [1,\infty)$, then the integral formula 
\begin{equation}\label{eq:lerch-integral}
\Phi(t, n, x)=\frac{1}{n!} \int_0^{\infty} \frac{s^{n-1} e^{-x s}}{1-t e^{-s}} d s  
\end{equation}
defines this function on the half-plane $\operatorname{Re}(x)>0$.   This function meromorphically extends to the rest of the complex plane with poles at $x\in \Z_{\leq 0}$ via the equality $\Phi(t,n,x)-t\Phi(t,n,x+1)=\frac{1}{x^n}$.  Of course, the integral formula is precisely the Laplace transform along the positive real axis of the function $ \frac{s^{n-1}}{(n-1)!\cdot (1-te^{-s})}$, which is the Borel transform of the unique power series that solves the recursion above.

We can use this function to describe the function $F^{\phi}_T$ when $\cos\phi$ has the same sign as $\log |t|$, that is, $\cos\phi>0$ if $|t|>1$ and $\cos \phi<0$ if $|t|<1$.  Consider the functions \[\lerchp{n}{a}{x}=-t^{-1}\Phi(t^{-1},n,x-a+\half) \qquad \lerchm{n}{a}{x}= (-1)^n\Phi(t,n,a-x+\half).\]  These functions satisfy the recursion
\begin{equation}
        \lerchp{n}{a}{x+\half}-t\lerchp{n}{a}{x-\half}=\lerchm{n}{a}{x}-t \lerchm{n}{a}{x}= \frac{1}{(x-a)^n}.
\end{equation}

Throughout the result below, we assume $t\neq 1$ and that \eqref{eq:P-principal} gives the principal part decomposition of $\frac{Q(z)}{P(z)}$. 
\begin{thm}\label{thm:lerch-expansion}
     If $t\notin (0,1]$ then:
    \begin{equation}\label{eq:lerch-FT}
        \tilde{F}^{0}_T(x)=\sum_{a\in \C} \sum_{\ell=1}^{p_a}D_{a}^{(\ell)}\lerchp{\ell}{a}{x}.
        \end{equation}
If $|t|\geq 1$ and $\cos \phi>0$ then $\tilde{F}^{\phi}_T(x)=\tilde{F}^{0}_T(x) $.  Similarly, if $t\notin [1,\infty)$ then:       
        \begin{equation}\tilde{F}^{\pi}_T(x)=\sum_{a\in \C} \sum_{\ell=1}^{p_a}D_{a}^{(\ell)}\lerchm{\ell}{a}{x}.        
    \end{equation}
    If $|t|\leq 1$ and $\cos\phi <0$ then $\tilde{F}^{\phi}_T(x)=\tilde{F}^{\pi}_T(x) $.
\end{thm}
On the other hand, if $\phi$ is separated from the positive and negative real axes by a Stokes ray, we don't have a similar infinite series expansion of $F^{\phi}$ but it has a similar expansion in terms of Laplace transforms of $ \frac{s^{n-1}}{(n-1)!\cdot (1-te^{-s})}$ along the rays with $\frac{\theta+2\pi k}{-\log |t|}\leq \tan \phi\leq \frac{\theta+2\pi (k+1)}{-\log |t|}$.  

If $t=1$, then the situation is more subtle.  We can still calculate what the Borel transform of our solution should be, but this transform could have a pole at the origin, in which case the result is not the Borel transform of a power series in $x^{-1}\C[[x^{-1}]]$. 
This pole is simple if it exists and the residue at $s=0$ is $-t^{-1}\sum_{a\in \C}D_a^{(1)}$.

Thus, if $\sum_{a\in \C}D_a^{(1)}=0$, then the numerator and denominator of \eqref{eq:Borel-transform} both have simple zeros at $s=0$ which cancel, and so we can use \eqref{eq:phi-transform} to define solutions to the recursion \eqref{eq:tilde-recursion}.  In this case, the only Stokes rays are the two halves of the imaginary axis, so the solution  $\tilde{F}_T^{\phi}$ only depends on whether $\cos\phi$ is positive or negative.

{\renewcommand{\markboth}[2]{}\printbibliography}
\end{document}